
\documentclass[12pt]{amsart}
\usepackage{amsmath}
\usepackage[english]{babel}
\usepackage[colorlinks,
linkcolor=blue,
urlcolor=blue,
citecolor=red]{hyperref}
\usepackage[usenames,dvipsnames]{xcolor}
\usepackage{microtype}


\newcommand{\R}{{\mathbf R}}

\newcommand{\ep}{{\epsilon}}
\newcommand{\spt}{\mathop{\rm spt}}

\newcommand{\p}{{\partial}}

\newcommand{\Hess}{\mathop{\rm Hess}}

\newcommand{\LL}{\mathcal{L}}

\newcommand{\vol}{{\rm vol}}
\newcommand{\vg}{\vol_g}

\newcommand{\pt}{{\p}}
\newcommand{\Ric}{{R}}
\renewcommand{\d}{{d}}

\newtheorem{theorem}{Theorem}

\newtheorem{corollary}[theorem]{Corollary}

\newtheorem{lemma}[theorem]{Lemma}

\newtheorem{proposition}[theorem]{Proposition}
\newtheorem{remark}[theorem]{Remark}

\newdimen\dummy
\dummy=\oddsidemargin
\addtolength{\dummy}{72pt}
\marginparwidth=.5\dummy
\marginparsep=.1\dummy

\usepackage{todonotes}

\begin{document}

\title[An elliptic proof of the Lorentzian splitting theorems]
{An elliptic proof of the splitting theorems from Lorentzian geometry}
\author[Braun]{Mathias Braun}
\address{Institute of Mathematics, EPFL, 1015 Lausanne, Switzerland\newline \tt mathias.braun@epfl.ch}
\author[Gigli]{Nicola Gigli}\address{SISSA, Trieste, Italy\newline \tt ngigli@sissa.it}
\author[McCann]{Robert J. McCann}
\address{Departments of Mathematics and Economics, University of Toronto, Toronto, Ontario, Canada\newline \tt mccann@math.toronto.edu}
\author[Ohanyan]{Argam Ohanyan}
\address{Department of Mathematics, University of Toronto, Toronto, Ontario, Canada\newline \tt argam.ohanyan@utoronto.ca}
\author[S\"amann]{Clemens S\"amann}
\address{{Faculty of Mathematics, University of Vienna, Vienna, Austria\newline \tt clemens.saemann@univie.ac.at}}

\thanks{2020 MSC Classification Primary: 83C75 Secondary: 35J92 35Q75 (49Q22 51K10) 53C21 53C50 58J05.} 


\thanks{\copyright \today\ by the authors. Under revision for \em Comm. Amer. Math. Soc.}
\begin{abstract}
We initiate the development of a theory of the negative homogeneity $p$-d'Alembert operator in the smooth setting. We relate it to a Bochner-Ohta identity of homogeneity of $2p-2<0$.
We identify conditions under which the unexpected ellipticity of this operator turns out to be uniform.
We exploit 
this to give
a new proof of the Eschenburg, Galloway and Newman splitting theorems from Lorentzian geometry, which allows us bring them into a framework closer to the Cheeger-Gromoll splitting theorem from Riemannian geometry.  
\end{abstract}

\maketitle

\tableofcontents

\section{Introduction}

\subsection{History and results}
Splitting theorems play a vital role in both Riemannian \cite{CheegerEbin98} and Lorentzian~\cite{BeemEhrlichEasley96} geometry.  Under the strong energy condition from general relativity,
they confirm the intuition expressed, e.g.\ by Geroch~\cite{Geroch70}, following the discovery of the singularity theorems \cite{Penrose65a,Hawking66a,HawkingPenrose70} by Penrose and Hawking,
that a geodesically complete spacetime ought to be exceptional: if even one of its complete geodesics is timelike and maximizing, then the space is a stationary, static, geometric product. 
This is made precise by the following theorem, 
in which spacetime refers to a connected smooth $n$-dimensional manifold $M$ carrying a smooth Lorentzian metric tensor $g_{ij}$ 
of signature $(+,-,\ldots,-)$
and a continuous timelike vector field which distinguishes future from past. 
Our methods are adapted to obtain Lorentzian splitting theorems for low regularity metrics $g_{ij} \in C^{1}_{loc}(M)$ with the option of Bakry-\'Emery style weights \cite{Case10} in a companion work \cite{BGMOS25+}.
The present manuscript focuses on the classical smooth case to exploit existing theory \cite{GallowayHorta96,McCann20}, and to prevent technicalities from obscuring the power and simplicity of the new ideas.

\begin{theorem}[Lorentzian splitting theorem]
\label{T:Lorentzian splitting}
If a spacetime $(M,g)$ satisfies the strong energy condition 
\begin{equation}\label{strong energy}
R(v,v) \ge 0\ \mbox{\rm for all timelike vectors}\ v,
\end{equation} 
contains a proper-time maximizing, isometrically embedded copy $\gamma$ of the Euclidean line $\R$,
and is either   (a) globally hyperbolic or (b) timelike geodesically complete,
then $(M,g)$ is isometric to $(\R \times S, dt^2 - h)$ where $(S,h)$ is a complete Riemannian manifold with nonnegative Ricci curvature.
\end{theorem}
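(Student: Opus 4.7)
The plan is to mirror the Cheeger-Gromoll strategy in the Lorentzian setting, replacing the linear d'Alembertian (which is hyperbolic, and hence affords no useful maximum principle) by a nonlinear $p$-d'Alembert operator
\[
\square_p u := \mathrm{div}\bigl(|g(\nabla u,\nabla u)|^{(p-2)/2}\,\nabla u\bigr),
\]
chosen with negative homogeneity so that the coefficient diverges as the gradient approaches the null cone, and $\square_p$ is genuinely elliptic on the open timelike subcone. I would fix a base point $o\in\gamma$, split $\gamma$ into future and past rays $\gamma^\pm$, and define Busemann-type functions $b^\pm(x) := \lim_{t\to\infty}\bigl(t - d(x,\gamma(\pm t))\bigr)$ via the Lorentzian time-separation. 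The reverse triangle inequality gives $b^+ + b^- \le 0$ on the open set where both functions are finite, with equality along $\gamma$.

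Next I would construct one-sided smooth barriers from below for $b^\pm$ at every interior point, by sliding asymptotic co-rays from the base point in the spirit of Eschenburg-Galloway-Newman. A $p$-d'Alembert comparison inequality applied to the distance function from $\gamma(\pm t)$, using the strong energy condition \eqref{strong energy}, would yield $\square_p\, d(\cdot,\gamma(\pm t)) \le 0$ in the support sense along the approximating rays; passing to the limit shows that $b^\pm$ satisfy $\square_p b^\pm \le 0$ in the viscosity/support sense, with gradients strictly inside the timelike cone.

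Applying the strong maximum principle for the quasilinear elliptic operator $\square_p$ to $-(b^++b^-)\ge 0$ — which attains its minimum $0$ along $\gamma$ — yields $b^+ + b^- \equiv 0$ in an open neighborhood of $\gamma$. A connectedness/continuation argument using either global hyperbolicity (a) or timelike geodesic completeness (b) should then extend this equality to the whole spacetime. Elliptic regularity for $\square_p$ in the timelike regime promotes $b^\pm$ to smooth solutions of $\square_p b^\pm = 0$ with unit timelike gradient. A Bochner-Weitzenb\"ock identity for $\square_p$ applied to $b := b^+$, combined with $\square_p b = 0$, $|g(\nabla b,\nabla b)| = 1$, and $R(\nabla b,\nabla b)\ge 0$, forces $\Hess b \equiv 0$. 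Hence $\nabla b$ is a parallel unit timelike vector field; its integral flow together with any level set $(S,h)$ yields the desired isometric product $(M,g)\cong(\R\times S, dt^2 - h)$, and the strong energy condition descends to $\mathrm{Ric}_h \ge 0$ on $S$.

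The main obstacle is controlling the degeneracy and non-uniform ellipticity of $\square_p$, which vanishes or blows up at the null cone. All of the analytic machinery — comparison geometry along maximising geodesics, the strong maximum principle, elliptic regularity, and the Bochner identity — must be set up carefully in the open timelike cone, with enough uniformity to pass from approximating distance functions to the limit Busemann functions. The negative homogeneity of $p$ is precisely what keeps the theory honest: it penalises gradients approaching the null cone, and thereby reconciles Lorentzian geometry with the elliptic techniques of Cheeger-Gromoll.
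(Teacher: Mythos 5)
Your outline follows the same broad strategy as the paper (Busemann functions from the two ends of $\gamma$, a $p$-d'Alembert comparison for $p$ of negative homogeneity, a maximum-principle tangency step, then a Bochner identity forcing $\Hess b\equiv 0$), but two of your steps would fail as stated. First, the globalization: you propose to extend the equality of the Busemann functions ``to the whole spacetime'' by a connectedness/continuation argument. This cannot work, because away from a neighbourhood of the line the Lorentzian Busemann functions need be neither finite, nor continuous, nor have timelike gradient (de Sitter space is both globally hyperbolic and timelike geodesically complete, yet its Busemann functions are discontinuous and not globally real-valued); all the ellipticity you rely on is confined to a neighbourhood of $\gamma$ where Eschenburg/Galloway--Horta Lipschitz bounds keep $db^\pm$ in a compact subset of the timelike cone. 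The actual globalization is geometric, not analytic: one first splits a neighbourhood $W$ of $\gamma$ by flowing a totally geodesic slice along timelike asymptotes (limit-curve lemmas of Galloway--Horta), propagates the vanishing Hessian along those asymptotes, and in case (a) must separately prove the asymptotic lines are \emph{complete} (Galloway's non-imprisonment argument, since the exponential map need not be globally defined); then one enlarges a maximal split neighbourhood through flat strips, using that strongly parallel lines share their Busemann functions, to conclude $W=M$. Your sketch also omits entirely the proof that $(S,h)$ is metrically complete, which is part of the statement and requires separate arguments in cases (a) and (b).

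Second, the regularity step ``elliptic regularity for $\square_p$ promotes $b^\pm$ to smooth solutions'' is not available: for $p<1$ the operator is singular/degenerate and no off-the-shelf Schauder or Evans--Krylov type theory applies to it (even $C^{1,\alpha}$ theory for the Riemannian $p$-Laplacian is sharp and does not cover this regime). What actually closes the argument is lower-tech: equi-semiconcavity of the approximating functions $b^+_r$ (inherited from upper support functions of the form $-\ell(\cdot,y)$, with constants uniform on compacta), which yields a.e.\ convergence of $db^+_r$ — needed because the weak form of the comparison inequality involves the gradient after only one integration by parts — plus semiconvexity of $b^-$, so that after the tangency step $b^+=b^-\in C^{1,1}_{loc}$; smoothness and $\Hess b^+=0$ then come \emph{from} the Bochner--Ohta identity applied through mollification to the $C^{1,1}$ function, not before it. Relatedly, your maximum-principle step glosses over the fact that sums or differences of super/subsolutions of a \emph{nonlinear} equation are not themselves super/subsolutions: one must linearize the flux along the segment joining $db^-$ to $db^+$, obtaining a divergence-form linear operator with merely bounded measurable coefficients, and verify its uniform ellipticity near $\gamma$ (this is where the Lipschitz/semiconcavity bounds enter) before invoking the strong maximum principle. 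Finally, a sign slip: in Lorentzian signature the reverse triangle inequality gives the excess $b^++b^-\ge 0$ in your convention (equivalently $b^+\ge b^-$ in the paper's), not $\le 0$; the Riemannian inequality does not carry over.
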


For smooth metric tensors $g$, the theorem was conjectured under {timelike geodesic completeness} 
(b) by Yau \cite{Yau82} and proved by Newman \cite{Newman90},
after Galloway had established the conclusion for spacetimes with compact Cauchy surfaces \cite{Galloway84},
Beem, Ehrlich, Markvorsen and 
Galloway~\cite{BeemEhrlichMarkvorsenGalloway84}
under (a) plus the stronger hypothesis of a suitable sign on all timelike {\em sectional} curvatures,  Eschenburg \cite{Eschenburg88} under (a) plus (b),  and 
Galloway~\cite{Galloway89} under (a) using a result of Bartnik~\cite{Bartnik88b}.  
Under a {\em nonsmooth} sectional curvature hypothesis relaxing \cite{BeemEhrlichMarkvorsenGalloway84}, the result has been extended to the Lorentzian length space \cite{KunzingerSaemann18} setting
by Beran, Ohanyan, Rott and Solis~\cite{BeranOhanyanRottSolis23}. Given recent interest in establishing Hawking and/or Penrose type singularity theorems
in very low regularity settings \cite{KunzingerSteinbauerStojkovicVickers2015,KunzingerSteinbauerVickers15,GrafGrantKunzingerSteinbauer18,Graf20,KunzingerOhanyanSchinnerlSteinbauer2022,AlexanderGrafKunzingerSaemann19+,CavallettiMondino20+},
 the generalization of Theorem~\ref{T:Lorentzian splitting} to such a setting remains a tantalizing challenge 
for future research; cf.~\cite{BeranOctet24+} and analogous developments in positive signature \cite{Gigli26,Gigli14,Gigli15}.

Existing proofs are rather complicated relative to that of the analogous 
 splitting theorem from Riemannian geometry by
Cheeger and Gromoll~\cite{CheegerGromoll71} or its simplification by Eschenburg and Heintze \cite{EschenburgHeintze84}.
The reason for this is that the Lorentzian Laplacian
(also known as the {\em d'Alembertian} or wave operator), although linear, is hyperbolic rather than elliptic.  The purpose of the present article is to simplify the proof of the Lorentzian splitting theorems, by replacing the Laplacian with the nonlinear $p$-d'Alembert operator $-\square_p u := \nabla \cdot (|du|^{p-2} du) = \delta E/\delta u$ given by the variational
derivative of the functional $E(u) := \int_M H(du) \, d\vg$ with
\begin{equation}\label{Hamiltonian}
H(v):=
\begin{cases}
-\frac1p |v|^p := -\frac1p (g^{ij}v_iv_j)^{p/2} & v\ \mbox{is future-directed},\ p \ne 0, \\
-\log |v| & v\ \mbox{is future-directed},\ p=0, \\
+\infty & \mbox{else.}
\end{cases}
\end{equation}
In the range $p<1$ this operator
turns out to be (non-uniformly) elliptic~\cite{BeranOctet24+} as a consequence of convexity of the Hamiltonian integrand $H(v)$ introduced for $p\ne 0$  in \cite{McCann20,Min:19}. 
Thus we sacrifice linearity to gain ellipticity. 
Our signs have been chosen to make the linearization of 
$\square_p$ around any suitable $u$ with $E(u)<\infty$ a negative semidefinite operator, in analogy with 
the analyst's rather than the (Riemannian) geometer's Laplacian.
Combining this insight with optimal transport ideas,
elliptic techniques \cite{GilbargTrudinger98}, 
a Bochner-Ohta identity of homogeneity $2p-2<0$, 
cf.\ \cite{Ohta14h,MondinoSuhr23}, and the simplification of Eschenburg's and Newman's arguments by Galloway and Horta \cite{GallowayHorta96},  we are able to arrive at a 
{conceptually new}
proof of the splitting theorem.
We give a brief outline of our strategy below.
Curiously, this new approach to the smooth theorem 
relies on a d'Alembert comparison estimate for $p<1$ 
first established in a much less smooth setting with the octet~\cite{BeranOctet24+}.

\subsection{Overview of strategy}

The geometry of a spacetime is well-known to be encoded in a time-separation (or Lorentzian distance) function $\ell:M^2 \longrightarrow \{-\infty\} \cup [0,\infty]$ defined by
a Lagrangian action principle
\begin{align}
\label{global time separation}
\ell(x,y) &:= \sup_{\sigma(a) = x, \sigma(b)=y} \LL(\sigma),
\\ \LL(\sigma) &:= \int_a^b g(\sigma'(r),\sigma'(r))^{1/2} dr,
\label{action}
\end{align}
where the supremum is over future-directed Lipschitz curves $\sigma$ connecting $x,y \in M$.
It satisfies,
for all $x,y,z \in M$, the reverse triangle inequality
\begin{equation}\label{rti}
\ell(x,z) \ge \ell(x,y) + \ell(y,z),
\end{equation}
{(with the convention $\infty-\infty=-\infty$ under hypothesis (b)).}
(Under hypothesis (a), it also satisfies the reflexivity property $\ell(x,x)=0$ and the antisymmetry condition
$$
\min\{\ell(x,y),\ell(y,x)\} =-\infty\ \mbox{unless}\ x=y.)
$$
The chronological and causal relations are then defined by $I^+= \{ \ell >0\} \subset M^2$ and $J^+=\{ \ell \ge 0\} \subset M^2$, 
and we also write any of the equivalent conditions (i) $x \ll y$ or (ii) $y \in I^+(x)$ or (iii) $x \in I^-(y)$ if and only if (iv) $(x,y) \in I^+$;  similarly, we write (i') $x \le y$ or (ii') $y \in J^+(x)$ or (iii') $x \in J^-(y)$ if and only if (iv') $(x,y) \in J^+$. Also $I^\pm(X) := \cup_{x \in X} I^\pm(x)$ and $I(X,Y) :=I^+(X)\cap I^-(Y)$ for $X,Y \subset M$ and analogously for $J^\pm$. Note that, on any spacetime, the reverse triangle inequality implies the following push-up property for the timelike relation: whenever $x \leq y \ll z$ or $x \ll y \leq z$, then already $x \ll z$, for all $x,y,z \in M$.

Given $[a,b] \subset [-\infty,\infty]$,
a {\em ray} will refer to an inextendible, 
maximizing, causal geodesic $\gamma:[a,b)\longrightarrow M$.
Here {\em inextendible} means $\lim_{s\to b} \gamma(s)$ does not converge to any point in $M$,
and {\em maximizing} means the Lorentzian length functional 
\eqref{action} satisfies
$\LL(\gamma|_{[a,s]})=\ell(\gamma(a),\gamma(s))$
for each $s\in [a,b)$.
Being a causal geodesic,  $\gamma$ is affinely parameterized by default and either future- or past-directed; we say $\gamma$ is ({\em future} or {\em past}) {\em complete} precisely when $b=\infty$ in this parameterization.
A {\em line} will refer to a doubly inextendible, 
maximizing, causal geodesic $\gamma:(a,b) \longrightarrow M$,
where now maximizing means $\LL(\gamma|_{[r,s]})=\ell(\gamma(r),\gamma(s))$
for each $[r,s]\subset (a,b)$.
When future-directed, the (affinely parameterized) line is said to be {\em future-complete} if $b=\infty$, {\em past-complete} if $a=-\infty$, and {\em complete} if both hold. For causal lines and rays, inextendibility follows from completeness assuming (a) or (b).  For timelike rays 
{under timelike geodesically completeness} (b) only,  the converse is true.
Thus one of the technical challenges surmounted by Galloway working under (a) {global hyperbolicity} without (b) was the fact that rays and lines need not be complete, because the exponential map may not be globally defined \cite{Galloway89}.  Working under (b) without (a), Newman instead had to contend with $(x,y)$ for which the supremum \eqref{global time separation} defining 
$\ell(x,y) \ge 0$ need neither be attained nor continuous nor finite \cite{Newman90}.
Henceforth, when we speak of lines or rays, we will mean future-directed, timelike, and proper-time parameterized, unless explicitly stated otherwise.
With this convention,  any (line or) ray defined on a (doubly) unbounded domain may be inferred to be complete 
unless otherwise noted.
We write $I^\pm(\gamma):= I^\pm(\gamma((a,b))$ and $I(\gamma):= I^+(\gamma)\cap I^-(\gamma)$ for a line, and similarly for a ray.

As in more traditional approaches to the splitting theorems (except \cite{Galloway84} and \cite{Bartnik88}), the central objects of interest are the forward and backward Busemann functions $\pm b^\pm$ analogous to those introduced in \cite{Busemann32} and rediscovered by Cheeger and Gromoll \cite{CheegerGromoll71}.  The Busemann function 
$b^+:M \longrightarrow [-\infty,\infty]$ associated to a 
complete future-directed ray $\gamma$ is defined as the limit $b^+(x)=\lim_{r\to \infty} b^+_r(x)$,  where
\begin{equation}\label{b^+_r}
b^+_r(x) = \ell(\gamma(0),\gamma(r))-\ell(x,\gamma(r)); 
\end{equation}
the negated Busemann function of a complete past-directed ray is defined by $b^-(x) = \lim_{r\to-\infty}b^-_r(x)$, where
\begin{equation}\label{b^-_r}
b^-_r(x) = \ell(\gamma(r),x)-\ell(\gamma(r), \gamma(0)).
\end{equation}
Thus a complete future-directed line $\gamma$ has two Busemann functions $\pm b^\pm$ associated with it.  From the triangle inequality \eqref{rti} and proper-time parameterization of $\gamma$ it easily follows and is well-known that the limits above converge monotonically and $r>0$ implies
\begin{equation}\label{ordering}
b^+_r \ge b^+ \ge b^- \ge b^-_{-r},
\end{equation}
with all four functions coinciding on the intersection of the line $\gamma$ with the open diamond $I(\gamma(-r),\gamma(r))$.
Note that $db^\pm$ does not depend on which point on the line $\gamma$ is chosen as the base $\gamma(0)$,
because the limiting functions $b^+$ differ only by additive constants.

In the linear case $p=2$, where $b^+_r$ is smooth the strong energy condition \eqref{strong energy}
is well-known to yield the following bound \cite[\S 5]{Eschenburg88} on the d'Alembertian

\begin{equation}\label{p-comparison}
\square b^+_r = \square_p b^+_r \le \frac{n-1}{\ell(\cdot,\gamma(r))} \quad {\rm on}\ I^-(\gamma(r));
\end{equation}
the same inequality extends across the timelike cutlocus when derivatives are interpreted in a suitable weak sense \cite{BeranOctet24+}, 
as in the Riemannian case~\cite{Calabi58v}.
Since $|db^+_r|=1$,  it is perhaps not surprising that we are able to show the same inequality holds weakly for the $p$-d'Alembertian if $p<1$;  in the current smooth setting, we give a new proof of this fact that is logically independent of our original argument with the 
octet~\cite{BeranOctet24+}.
With our sign conventions, the Busemann functions $b^+\ge b^-$ associated to a line are therefore $p$-super- and $p$-subharmonic:
$\square_p b^+ \le 0 \le \square_p b^-$. 
For $p<1$ however, (non-uniform) ellipticity of $-\square_p$ and the strong maximum principle suggest the touching super- and subsolutions
must coincide,  so that $b^+=b^-$ is $p$-harmonic. 
In that case the negatively homogeneous Bochner-Ohta formula established below (which 
{is a variant of formulas appearing in \cite{Ohta14h, MondinoSuhr23}})
implies $b^+$ is smooth and its Hessian vanishes identically; by contrast, the more familiar linear/quadratic Bochner-Weitzenb\"ock identity is unhelpful since in Lorentzian signature
it controls only a signed difference of Hessian terms (analogous to the failure of the usual d'Alembertian to be elliptic).
At this point we are essentially done:  the level sets $S_r: = \{x \in M \mid b^+(x)=r\}$ of $b^+$ are spacelike surfaces whose normal vector $N=d b^+$ is locally parallel.  From here it is easy to deduce the second fundamental form of $S_r$ vanishes and that $N$ is a Killing vector field generating a local isometry between $S_r$ and $S_0$ for each $r \in \R$.  An argument is still needed to globalize this result, achieved through a simplification of \cite{Eschenburg88,Galloway89,Newman90}.

\subsection{Technical challenges} Unfortunately, the strong maximum principle is delicate to establish for non-uniformly elliptic nonlinear equations; see e.g.~the more standard case $p>1$ treated in 
\cite{GoffiPediconi21,PucciSerrin04} and their 
references for the $p$-Laplacian in Riemannian signature; 
examples of Krol and Lewis discussed in \cite{Lewis80} show the $C^{1,\alpha}$ regularity established, e.g.\ in \cite{diBenedetto83,Lewis83}, is best possible when $p>2$.  Our equation with $p<1$ in Lorentzian signature is worse.
We must first establish that the equation becomes uniformly elliptic when linearized around the Busemann functions in question. To do this requires more regularity than Busemann functions generally possess.
(As observed  in~\cite{GallowayHorta96}, de-Sitter space is both (a) globally hyperbolic and (b) (timelike) geodesically complete, but its Busemann functions exhibit discontinuities and are not globally real-valued.) However, under the hypotheses of Theorem~\ref{T:Lorentzian splitting},  Galloway and Horta gave a simple demonstration of Eschenburg's observation that the Busemann functions are Lipschitz continuous in a neighbourhood of the line.  By combining their approach with Proposition~3.4 of McCann~\cite{McCann20}, we are able to improve this conclusion by showing the limiting Busemann function $b^+$ inherits semiconcavity from the approximate Busemann functions $b^+_r$.   
In this neighbourhood then, we can pass to the limit of the (weakly reformulated) 
nonlinear comparison inequality \eqref{p-comparison} and show the linearization of the resulting operator around the Busemann function becomes uniformly elliptic. Unlike the Riemannian case $p=2$, the weak form of \eqref{p-comparison} involves only one integration by parts so to obtain the limit $r \to \infty$ requires a.e.\ convergence not only of $b_r^+$ to $b^+$ but also of $db_r^+$ to $db^+$. 
This is enough regularity to obtain $b^+=b^-$ from the strong maximum principle; moreover, it is enough regularity to apply our Bochner-Ohta identity and
 conclude that the geometry splits locally and smoothly on the neighbourhood in question. {In comparison, we mention a maximum principle by Andersson-Galloway-Howard \cite{AnderssonGallowayHoward98}. Their idea (generalizing the same line of thought from the original proofs of the splitting theorem) is that after restricting the usual d'Alembertian to a foliation of certain spacelike hypersurfaces, it becomes the respective intrinsic Riemannian Laplacian, for which elliptic maximum principle techniques apply. Our approach is global in nature and does not require this restriction process. This comes with several advantages. For instance, t}{o globalize this splitting theorem we are able to simplify the strategies of \cite{Eschenburg88,GallowayHorta96} by using the knowledge that the Hessian of $b^+=b^-$ vanishes on a neighbourhood of the line,  and \emph{not just on a spacelike hypersurface}.
 We still need to propagate this information outside the neighbourhood in question,  first along nearby asymptotes to the line, and then throughout $M$ by connectedness. 
 }

\subsection{Outlook}

In this work we focus on developing elliptic $p$-d'Alembert methods and using them to significantly simplify the proofs of the classical Lorentzian splitting theorems \cite{Eschenburg88, Galloway89, Newman90, GallowayHorta96}. In the more complicated setting of our companion work, we exploit the same techniques to obtain new generalizations both to weighted spacetimes and to Lorentzian metric tensors whose regularity is merely $C^1_{loc}$ \cite{BGMOS25+}; see~\cite{KunzingerOhanyanVardabasso25} for an analogous splitting statement in Riemannian signature. The rich literature on spacetime geometry with non-smooth Lorentzian metrics \cite{KunzingerSteinbauerStojkovicVickers2015, KunzingerSteinbauerVickers15, GrafGrantKunzingerSteinbauer18, Graf20, KunzingerOhanyanSchinnerlSteinbauer2022} highlights the relevance of such results, as non-smooth metrics arise naturally in general relativity, e.g.\ as solutions to the Einstein equations.\ Further refinements of the Lorentzian splitting theorems may require a more sophisticated analysis of the $p$-d'Alembert operator, which
we defer to future work.

\section{Equi-semiconcavity of the Busemann limits}

A function on a Lorentzian manifold will be called {\em linear} if its covariant Hessian vanishes. Our goal is to prove linearity of the Busemann function $b^+$.  
In contrast to the Riemannian setting,  where the set of linear functions of slope 1 which vanish at a given point is compact,
this is not the case in Lorentzian geometry,  as the limits $\theta \to \pm \infty$ of the following elementary family of functions on the Minkowski $(t,x)$ plane show: 
\begin{equation}\label{divergent linear family}
\{t \cosh \theta + x \sinh \theta\}_{\theta \in \R}.
\end{equation}
This divergence is due to noncompactness of the pseudosphere,
and reflects the non-uniform ellipticity of the $p$-d'Alembert operator for $p<1$. A similar mechanism is responsible
for the poor regularity of Lorentzian Busemann functions more generally.  However, in a neighbourhood of the line defining the Busemann function,  this non-uniformity of ellipticity will be ruled out using a series of simple but subtle observations by Eschenburg \cite{Eschenburg88} and Galloway \cite{Galloway89} 
{under globally hyperbolicity} (a) and by Galloway and Horta \cite{GallowayHorta96} following Newman \cite{Newman90} 
{under timelike geodesically completeness} (b),
summarized in the following theorem.
Here Lipschitz refers to any auxiliary Riemannian metric $\tilde g$ on $M$.  Such an auxiliary metric plays a useful role throughout,
{and can be taken to be complete on $M$ \cite{NomizuOzeki61,Geroch68}.}

\begin{theorem}[Busemann functions are Lipschitz near the line]\label{T:Lipschitz}
In an (a) globally hyperbolic or (b) timelike geodesically spacetime $M$, let $\gamma: 
(-\infty,\infty) \longrightarrow M$ be a complete future-directed timelike line.
Then (i) the Busemann functions $b^+$ and $-b^-$ associated with $\gamma$ are locally Lipschitz continuous in a neighbourhood $U$ of $\gamma$. 
Given $X \subset U$ compact,  there exists $R,
\tilde C >0$ 
such that  such that (ii) a maximizing timelike proper-time parameterized geodesic segment connects $x$ to $\gamma(r)$ for each pair
$x\in X$ and $|r| \ge R$. Moreover, (iii) for $x \in X$ any such segment $\sigma$ satisfies
$\dot \sigma(0) \in K_{\tilde C}(x) := \{v \in T_x M \mid |v|_g =1, |v|_{\tilde g} \le \tilde C\}$.
\end{theorem}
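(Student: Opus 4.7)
The plan is to first establish existence of maximizers (ii) and uniform tangential control (iii), and then deduce local Lipschitz continuity (i) from these, broadly following the strategies of Eschenburg~\cite{Eschenburg88} and Galloway~\cite{Galloway89} in case (a), and of Galloway-Horta~\cite{GallowayHorta96} following Newman~\cite{Newman90} in case (b). As a preliminary, I would construct $U$ as a chronological neighbourhood of $\gamma$: using push-up and openness of $I^\pm$, one may shrink $U$ around $\gamma$ so that for any compact $X \subset U$ there is $R > 0$ with $\gamma(-r) \ll x \ll \gamma(r)$ for every $x \in X$ and every $r \ge R$. Consequently $\ell(x,\gamma(r))$ and $\ell(\gamma(-r),x)$ are strictly positive and, in case (b), kept finite by the reverse triangle inequality applied against $\gamma$.

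For (ii), in case (a) global hyperbolicity renders each diamond $J^+(x) \cap J^-(\gamma(r))$ compact, so an Avez--Seifert type argument yields a maximizing timelike proper-time parameterized geodesic segment. In case (b) one performs a limit-curve construction: take a sequence of almost-maximizing future-directed timelike curves from $x$ to $\gamma(r)$, extract a causal limit curve, and use timelike geodesic completeness to reparametrize it as a maximizing geodesic segment realizing $\ell(x,\gamma(r))$.

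For (iii), proceed by contradiction: assume there are $x_n \in X$ with $x_n \to x_\infty \in X$, indices $r_n \to \infty$, and maximizing proper-time segments $\sigma_n$ from $x_n$ to $\gamma(r_n)$ with $|\dot\sigma_n(0)|_{\tg} \to \infty$. Because $|\dot\sigma_n(0)|_g = 1$, reparametrizing each $\sigma_n$ by $\tg$-arclength yields curves of unit initial $\tg$-speed whose tangents become asymptotically null in $g$. By the Lorentzian limit-curve theorem, a subsequence converges locally uniformly to a future-directed null causal geodesic $\tau$ issuing from $x_\infty$, defined on an unbounded parameter interval. Combining $\ell(\gamma(-s), x_\infty) > 0$ (from $\gamma(-s) \ll x_\infty$) with $\tau$'s presence in the causal past of $\gamma(r_n)$ for large $n$, and invoking the push-up property along the chains $\gamma(-s) \ll x_\infty \le \tau(t)$ and $\tau(t) \le \cdot \ll \gamma(r_n)$, one produces a chronological path from $\gamma(-s)$ to $\gamma(r_n)$ whose Lorentzian length strictly exceeds $\ell(\gamma(-s),\gamma(r_n)) = r_n + s$, contradicting maximality of $\gamma$. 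A symmetric argument handles past-directed segments needed for $-b^-$.

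Finally, (i) is deduced from (ii) and (iii). For $x, y$ close in $X$, take a maximizing segment $\sigma$ from $x$ to $\gamma(r)$ with $\dot\sigma(0) \in K_{\tilde C}(x)$; joining a short auxiliary connector from $y$ to a nearby point on $\sigma$ builds a competitor from $y$ to $\gamma(r)$ of Lorentzian length at least $\ell(x,\gamma(r)) - C\, d_{\tg}(x,y)$, with $C$ depending only on $\tilde C$ and local geometry on $X$, \emph{not} on $r$. Swapping $x \leftrightarrow y$ gives a uniform Lipschitz estimate on $\{b^+_r\}_{r \ge R}$, which passes to the monotone limit $r \to \infty$; the argument for $-b^-$ is symmetric. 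The principal obstacle is (iii): converting asymptotic nullness of initial velocities into an actual violation of the line's maximality is delicate because causal limit curves need not remain timelike and one must simultaneously upgrade causal to chronological relations via push-up, precisely the type of non-uniformity foreshadowed by the divergent family~\eqref{divergent linear family}.
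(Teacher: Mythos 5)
Your proposal takes a genuinely different route from the paper: the paper's own proof of Theorem~\ref{T:Lipschitz} is essentially a citation argument (Theorem~3.7, Corollary~5.2 and Lemmas~3.3--3.4 of Galloway--Horta for case~(b); Eschenburg's Lemmas~3.2--3.3 and Galloway for case~(a)), combined with base-point independence of $b^+$ along the line and a covering/compactness argument to get uniform $R,\tilde C$ on $X$. You instead try to reprove the quoted results from scratch, and the two places where you are briefest are exactly where the cited papers do their real work; as written both contain genuine gaps.

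First, in case~(b) your step~(ii) does not go through: without global hyperbolicity, a limit curve of an almost-maximizing sequence from $x$ to $\gamma(r)$ need not join $x$ to $\gamma(r)$ (it may be future inextendible and never reach the endpoint), and Lorentzian arclength is only upper semicontinuous when the limits do connect the endpoints; so ``extract a causal limit curve and use timelike geodesic completeness to reparametrize it'' does not produce a maximizer. Completeness is not a reparametrization device here --- in Galloway--Horta it enters essentially: maximal segments to $\gamma(r)$ are obtained only at points where the (generalized) timelike co-ray condition holds (their Lemma~3.3), and the bulk of the work (Theorem~3.7) is showing that condition holds near the ray. This is precisely the difficulty Newman had to overcome, and it is why the paper cites rather than reproves it. Second, your contradiction in~(iii) has no mechanism: push-up along $\gamma(-s)\ll x_\infty\le\tau(t)$ and $\tau(t)\le\cdot\ll\gamma(r_n)$ yields chronological relations but no length estimate, and by definition of $\ell$ no causal path from $\gamma(-s)$ to $\gamma(r_n)$ can have length exceeding $\ell(\gamma(-s),\gamma(r_n))$; to contradict maximality of $\gamma$ you must exhibit a curve of length strictly greater than $r_n+s$, and nothing in your chain produces one. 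The actual arguments (Eschenburg Lemma~3.2; Galloway--Horta \S3) are quantitative: they use the splitting of lengths along the maximal segments $\sigma_n$ together with continuity of $\ell$ under~(a), respectively the completeness-based co-ray construction under~(b) where $\ell$ need not even be continuous, to rule out a null limit direction. By contrast, your final step --- deducing (i) from (ii)+(iii) by inserting a short connector and obtaining an $r$-independent Lipschitz bound for $b^+_r$ that passes to the limit --- is sound and is essentially how the cited references conclude.
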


\begin{proof}
Under {timelike geodesic completeness} (b),  Theorem 3.7 and Corollary 5.2 of \cite{GallowayHorta96} together yield the statement for a neighbourhood of $\{\gamma(r)\}_{r>0}$, assuming only that $\gamma$ is a ray.
However, in the case of a line the Busemann function $b^+$ is independent of which point on $\gamma$ is selected to be $\gamma(0)$, apart from an additive constant.
In case (b) this establishes (i); while (ii)--(iii) 
follow from Lemma 3.3--3.4 (ibid), completeness of $\gamma$, 
and compactness of $X$, using a covering argument. 
As Galloway and Horta 
note, (iii) and (i) are even easier to prove in {the globally hyperbolic} case (a),
as was first done by Eschenburg \cite[Lemmas 3.2--3.3]{Eschenburg88}
and exploited in \cite{Galloway89}; (ii) holds as soon as
$X \subset I^-(\gamma(R)) \cap I^+(\gamma(-R))$ in case (a),
and the existence of such $R$ follows from 
compactness of $X \subset U$ by a covering argument 
since finiteness of $b^\pm$ implies $U\subset I(\gamma)$.
\end{proof}

\begin{remark}[Conic intersections]\label{R:conic intersections}
Near any point in this neighbourhood,  the Lipschitz bound combines with the monotonicity (cf.~\eqref{1-steep}) of $b^+$ to 
prevent $db^+$ from escaping to infinity asymptotically to the light cone:   it must lie in the intersection of 
the solid future hyperboloid $g(v,v) \ge 1$ with the ellipsoid $\tilde g(v,v) \le \tilde C$.
Moreover, in Corollary \ref{C:semiconcave} below, we will show equality holds in $|db^+| \ge 1$, 
 hence $|db^+|=1=|db^+_r|$.
 \end{remark}

Heuristically at least, re-expressing the $p$-d'Alembertian in nondivergence form
\begin{align}
\begin{split}
\frac{\square_p u}{|du|^{p-2}}
&= -\frac1{|du|^{p-2}}\nabla \cdot (|du|^{p-2} du) 
\\&= (2-p)(\Hess u)(\frac{du}{|du|},\frac{du}{|du|}) +\square_2 u
\\&= \left[(2-p) \frac{\nabla^i u \nabla^j u}{|\nabla u|^2} - g^{ij} \right]\nabla_i\nabla_j u 
\label{nondivergence}
\end{split}
\end{align}
where $\nabla$ denotes the Levi-Civita connection of $g$, demonstrates this ellipticity.  Recalling the Lorentzian signature $(+, -, \cdots, -)$, by choosing suitable coordinates
we will eventually show that the term in square brackets 
--- presently expressed using abstract index notation \cite{Wald84} ---
becomes positive-definite for $p<1$ as soon as $du$ is timelike.  Indeed,  one could regard the term in square brackets as a (nonsmooth) Riemannian metric $h^+$ 
induced by $u=b^+$. 
 This suggests the associated metric tensor $h_{ij}^+ \in L^\infty(U)$ will become uniformly positive-definite in suitable coordinates --- 
 corresponding to uniform ellipticity of the nondivergence form 
 operator.   Notice however, since the coefficients are merely bounded and measurable,  the passage between divergence and nondivergence form is not automatic.

{We will frequently make use of \cite[Lemma 2.5]{GallowayHorta96}, which we shall prove now for non-smooth metrics (in anticipation of future applications). To this end, note that a (future) $S$-\emph{ray}, for some subset $S \subset M$, is a future inextendible causal geodesic $\gamma:[0,a) \to M$ maximizing the time separation to $S$, i.e.\ $\mathcal{L}(\gamma|_{[0,t]}) = \sup_{x \in S} \ell(x,\gamma(t))$, for every $t \in [0,a)$. Here, $\mathcal{L}$ denotes the Lorentzian arclength functional.
\begin{lemma}[Upper supports to approximate Busemann functions]
\label{L:upper support}
If $\gamma:[0,\infty)\longrightarrow M$ is a future complete $S$-ray in a spacetime $(M,g)$ with $g \in C^{0,1}_{\mathrm{loc}}$ and 
$\sigma_r:[0,s_r]\longrightarrow M$ is timelike and maximizing
between $\sigma_r(0)\in I^+(S)\cap I^-(\gamma(r))$ and $\sigma_r(s_r)=\gamma(r)$ for some $r>0$, then for each $s \in (0,s_r]$
\begin{equation}\label{u_r}
u_r(x)
:= b_r^+(\sigma_r(0)) + \ell(\sigma_r(0),\sigma_r(s)) - \ell(x,\sigma_r(s))
\end{equation}
satisfies $u_r(x) \ge b_r^+(x)$ for all $x \in M$,
where $b_r^+$ is from \eqref{b^+_r}. Equality holds at $x=\sigma_r(a)$ for each $a\in[0,s]$.
Moreover, both $u_r$ and $b^+_r$ are real-valued on 
the neighbourhood $I^+(S) \cap I^-(\sigma_r(s))$ of $\sigma_r(0)$.
\end{lemma}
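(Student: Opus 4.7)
The plan is to reduce both the inequality $u_r \ge b_r^+$ and its equality case along $\sigma_r$ to two applications of the reverse triangle inequality \eqref{rti}, the second one being saturated precisely because $\sigma_r$ is maximizing. First I would expand using \eqref{b^+_r}: the common $\ell(\gamma(0),\gamma(r))$ terms cancel, leaving
\[
u_r(x) - b_r^+(x) = \ell(\sigma_r(0),\sigma_r(s)) - \ell(\sigma_r(0),\gamma(r)) + \ell(x,\gamma(r)) - \ell(x,\sigma_r(s)).
\]
Because $\sigma_r$ is maximizing from $\sigma_r(0)$ to $\sigma_r(s_r)=\gamma(r)$ and $0 < s \le s_r$, its restriction $\sigma_r|_{[0,s]}$ maximizes too, so equality is attained in \eqref{rti} on the chain $\sigma_r(0),\sigma_r(s),\gamma(r)$, giving $\ell(\sigma_r(0),\gamma(r)) = \ell(\sigma_r(0),\sigma_r(s)) + \ell(\sigma_r(s),\gamma(r))$. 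Substituting collapses the first two terms and leaves
\[
u_r(x) - b_r^+(x) = \ell(x,\gamma(r)) - \ell(x,\sigma_r(s)) - \ell(\sigma_r(s),\gamma(r)) \ge 0,
\]
this last inequality being a second application of \eqref{rti}, now on the chain $x,\sigma_r(s),\gamma(r)$.

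For the equality claim at $x = \sigma_r(a)$ with $a \in [0,s]$ I would repeat the same argument with the starting point shifted: $\sigma_r|_{[a,s_r]}$ is maximizing as well, so the second application of \eqref{rti} above is in fact an equality rather than an inequality, collapsing the bound $\ge 0$ to $= 0$.

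It remains to verify that $I^+(S) \cap I^-(\sigma_r(s))$ is an open neighbourhood of $\sigma_r(0)$ on which $u_r$ and $b_r^+$ are real-valued. Openness is immediate as an intersection of two chronological open sets, and $\sigma_r(0)$ lies in it because $\sigma_r|_{[0,s]}$ is timelike. For any $x$ in the set, $x \ll \sigma_r(s) \le \gamma(r)$ together with the push-up property yields $x \ll \gamma(r)$, so $\ell(x,\sigma_r(s)) > 0$ and $\ell(x,\gamma(r)) > 0$; strict positivity is what forbids the indeterminate value $+\infty$ from the convention $\infty - \infty = -\infty$ applied to \eqref{b^+_r}. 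I expect the only mild technicality to lie in ruling out $\ell = +\infty$ on this neighbourhood in the announced $C^{0,1}_{\mathrm{loc}}$ regime: under global hyperbolicity this is automatic since $\ell$ is continuous and finite, while in the low-regularity setting one must cite the appropriate Lorentzian length space results ensuring finiteness of time separations between points of a chronological diamond controlled by strong causality. Apart from this finiteness issue, the algebraic core of the lemma is a direct consequence of \eqref{rti} and the maximality of $\sigma_r$.
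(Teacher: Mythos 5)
Your algebraic core coincides with the paper's proof: both rewrite $u_r - b_r^+$ via \eqref{b^+_r}, exploit maximality of $\sigma_r$ to saturate the reverse triangle inequality on the chain $\sigma_r(0),\sigma_r(s),\sigma_r(s_r)$, and then apply \eqref{rti} once more on the chain $x,\sigma_r(s),\sigma_r(s_r)$; and both establish the equality case by the same saturation argument applied to the maximizing subsegment $\sigma_r|_{[a,s_r]}$. That part is fine.

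The gap is in the ``real-valued'' claim. Strict positivity $\ell(x,\gamma(r))>0$ does not rule out $\ell(x,\gamma(r))=+\infty$, so your first sentence does not actually address finiteness, and your fallback of invoking global hyperbolicity or Lorentzian length space axioms imports hypotheses the lemma does not assume: it is stated merely for a spacetime with $g\in C^{0,1}_{\mathrm{loc}}$ carrying a future complete $S$-ray. The ingredient you are missing is precisely the $S$-ray hypothesis, which is what supplies the upper bound. For any $x\in I^+(S)$ pick $y\in S$ with $y\ll x$; then the reverse triangle inequality gives
\[
\ell(y,x)+\ell(x,\gamma(r)) \le \ell(y,\gamma(r)) \le \ell(S,\gamma(r)) = \mathcal{L}\bigl(\gamma|_{[0,r]}\bigr) < \infty,
\]
where the middle equality is the defining property of an $S$-ray. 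Hence $\ell(x,\gamma(r))$ is finite, and \eqref{rti} then forces $\ell(x,\sigma_r(s))<\infty$ as well once $x\in I^-(\sigma_r(s))$, so $u_r$ and $b_r^+$ are both real-valued on $I^+(S)\cap I^-(\sigma_r(s))$. This is exactly the one-line estimate the paper uses (``the fact that $\gamma$ maximizes time from $S$ to $\gamma(r)$ implies $0<\ell(x,\sigma_r(s_r))\le r$''); without it the finiteness statement is unproved in the generality claimed.
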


\begin{proof}
The definition \eqref{b^+_r} of $b^+_r$, maximality of 
$\sigma_r:[0,s_r] \longrightarrow M$ between $\sigma_r(0)$ and $\sigma_r(s_r)=\gamma(r)$,
and $s \in (0,s_r]$ yield
\begin{align}
u_r(x) &= \ell(\gamma(0),\sigma_r(s_r)) - \ell(\sigma_r(0),\sigma_r(s_r)) +\ell(\sigma_r(0),\sigma_r(s))- \ell(x,\sigma_r(s))
\nonumber \\ &= \ell(\gamma(0),\sigma_r(s_r)) - \ell(\sigma_r(s),\sigma_r(s_r)) - \ell(x,\sigma_r(s))
\nonumber \\ & \ge \ell(\gamma(0),\sigma_r(s_r))- \ell(x,\sigma_r(s_r)) 
\nonumber \\ & =b^+_r(x)
\nonumber 
\end{align}
as desired,  where the reverse triangle inequality \eqref{rti} has been used. 

For $a \in [0,s]$, equality holds at $x=\sigma_r(a)$ 
since maximality of $\sigma_r$ ensures the triangle inequality is saturated.  The fact that $\gamma$
maximizes time from $S$ to $\gamma(r)$ implies
$0<\ell(x,\sigma_r(s_r)) \le r$ for each $x \in I^+(S)\cap I^-(\sigma_r(s_r))$. The reverse triangle inequality shows $\ell(x,\sigma_r(s))$, $u_r(x)$ and 
$b^+_r(x)$ are all finite
on the smaller neighbourhood $I^+(S)\cap I^-(\sigma_r(s))$ of $\sigma_r(0)$.
\end{proof}
}

We assume $g_{ij} \in C^\infty$ hereafter.
Our first task will be to improve the regularity of the Busemann function on the neighbourhood mentioned above by showing it is semiconcave.
Fixing a smooth Riemannian metric $\tilde g$ on $M$, recall a function $u:M \longrightarrow \R$ is said to be {\em semiconcave} on $U\subset M$, with {\em semiconcavity constant} $C<\infty$ if 
$$
\limsup_{w \to 0} \frac{u(\exp^{\tilde g}_x w) + u(\exp^{\tilde g}_x -w) - 2u(x)}{\tilde g(w,w)} \le C
$$
for each $x \in U$. It is said to be locally semiconcave on $U$ if it is semiconcave on each compact subset of $U$.  Here $\exp^{\tilde g}$ denotes the Riemannian exponential map.   Although $C$ depends on $\tilde g$,  the property of being locally semiconcave does not.
For each $y \in M$,  the function $v(x) = -\ell(x,y)$ was shown to be semiconcave near each $x \in I^-(y)$ in Proposition 3.4 of \cite{McCann20}, with a semiconcavity constant $C(x,y)$ depending continuously on $x$ and $y$.   
The following lemma shows this semiconcavity is inherited by the Busemann function $b^+$.
A family of functions such as $\{b^+_r\}_{r \ge R}$ is said to be {\em equi-semiconcave} on $U \subset M$ if they share the same semiconcavity constant on each compact subset $X$ of $U$.

\begin{proposition}[Equi-semiconcavity of Busemann limits near the line]\label{P:semiconcave}
Let $b^+=\lim_{r \to \infty} b^+_r$ denote the Busemann function associated by \eqref{b^+_r} to a complete future-directed timelike line  $\gamma:(-\infty,\infty)\longrightarrow M$.
Then $b^+$ is locally semiconcave on the neighbourhood $U$ of $\gamma$ provided by Theorem \ref{T:Lipschitz}(a) or (b). 
On each compact set $X \subset U$, 
one can find $R=R(M,g, \tilde g,X,\gamma,\gamma(0))$ and a single semiconcavity constant $C=C(M,g,\tilde g,X,\gamma(\R))$ which works throughout $X$ for all $b^+_r$ with $r \ge R$. 
\end{proposition}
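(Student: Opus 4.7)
The plan is to construct, for each $x \in X$ and $r \ge R$, a semiconcave upper support for $b^+_r$ at $x$ whose semiconcavity constant is uniformly controlled, by combining Lemma~\ref{L:upper support} with \cite[Proposition~3.4]{McCann20}, and then to pass to the limit $r\to\infty$.

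Fix a compact $X\subset U$. Theorem~\ref{T:Lipschitz} provides $R>0$ and a constant $\tilde C$ such that for every $x\in X$ and $r\ge R$ there exists a proper-time parameterized maximizing future-directed timelike geodesic $\sigma_r^x:[0,s_r^x]\to M$ from $x$ to $\gamma(r)$, with $\dot\sigma_r^x(0)\in K_{\tilde C}(x)$. The set $\bigcup_{x\in X}\{x\}\times K_{\tilde C}(x)\subset TM$ is compact, so standard ODE theory provides $s_0\in(0,1]$, depending only on $(M,g,\tilde g,X,\gamma(\R))$, for which $\exp_x^g(sv)$ is defined for all $s\in[0,s_0]$ and all such $(x,v)$, with image contained in a compact set $Y\subset I^+(X)$. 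After possibly enlarging $R$ so that $s_r^x\ge s_0$ for all $x\in X$ and $r\ge R$ (using $s_r^x=\ell(x,\gamma(r))\to\infty$ uniformly on $X$ by the reverse triangle inequality), Lemma~\ref{L:upper support} applied with $s=s_0$ yields
\[
u_r^x(y):=b^+_r(x)+s_0-\ell(y,\sigma_r^x(s_0))\ge b^+_r(y)
\]
for all $y$, with equality at $y=x$, since $\ell(x,\sigma_r^x(s_0))=s_0$.

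By \cite[Proposition~3.4]{McCann20}, each function $y\mapsto-\ell(y,\sigma_r^x(s_0))$ is semiconcave in a neighbourhood of $x$, with a semiconcavity constant depending continuously on the pair $(x,\sigma_r^x(s_0))$ in the open set $\{(p,q):p\in I^-(q)\}$. Since these pairs range over a compact subset of this set (uniformly bounded away from the light cone by $\ell(x,\sigma_r^x(s_0))=s_0>0$), continuity yields a single constant $C=C(M,g,\tilde g,X,\gamma(\R))$ and a single $\tilde g$-ball radius $\rho_0>0$ that work uniformly. Equivalently, there exist supergradients $p_r^x\in T_xM$, of uniformly bounded $\tilde g$-norm (by the Lipschitz constant of $b^+_r$ from Theorem~\ref{T:Lipschitz}), such that
\[
u_r^x(\exp_x^{\tilde g} w)\le u_r^x(x)+\langle p_r^x,w\rangle_{\tilde g}+\tfrac{C}{2}|w|_{\tilde g}^2\qquad\text{for all }|w|_{\tilde g}\le\rho_0.
\]
The upper support relation $u_r^x\ge b^+_r$ with equality at $x$ transfers this quadratic bound to $b^+_r$, proving equi-semiconcavity of $\{b^+_r\}_{r\ge R}$ on $X$ with constant $C$.

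Finally, $b^+_r\downarrow b^+$ monotonically and pointwise, hence locally uniformly given the uniform Lipschitz bound. Fixing $x\in X$ and extracting a subsequence along which $p_r^x\to p^x$, the quadratic upper bound for $b^+_r$ passes to the limit termwise and gives the same bound for $b^+$ with the same $C$; this is exactly semiconcavity of $b^+$ at $x$ with constant $C$. The main obstacle -- uniformity in $(x,r)$ of the semiconcavity constant, of the neighbourhood radius, and of the supergradient bounds needed for the limit step -- is resolved by Theorem~\ref{T:Lipschitz}, whose confinement of the initial velocities $\dot\sigma_r^x(0)$ to a compact subset of $TM$ underwrites every downstream compactness argument.
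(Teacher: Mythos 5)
Your proposal follows the paper's strategy quite closely (upper supports via Lemma~\ref{L:upper support} at a fixed intermediate time along the maximizing segments, then a uniform semiconcavity constant from \cite[Proposition~3.4]{McCann20} over the compact set of pairs $(x,\sigma_r^x(s_0))$), but there is a genuine gap: you apply \cite[Proposition~3.4]{McCann20} \emph{in both cases (a) and (b)} without noting that the continuous dependence of the semiconcavity constant $C(x,y)$ asserted there was established only under hypothesis (a), global hyperbolicity. In the timelike geodesically complete setting (b), the statement of that proposition does not directly apply, and the paper explicitly flags this: the fix is to go into the \emph{proof} of \cite[Proposition~3.4]{McCann20}, observe that the constant it produces is given by an explicit integral along the timelike geodesic $\{\sigma_r^x(s)\}_{s\in[0,s_0]}$ depending only on the geometry $(\tilde g,g)$ of $M$ along that curve, and then bound this integral directly using the fact that the curve is confined to a fixed compact set $G$. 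As written, your proof establishes the result only in case (a). Beyond this, your ODE-based choice of a uniform existence time $s_0$ is a perfectly valid alternative to the paper's cut-locus bound $t_0$ from \cite[Proposition~9.7]{BeemEhrlichEasley96} (the latter is what makes the paper's later case distinction $t_0>0$ vs.\ $t_0=0$ necessary, since that proposition is again an (a)-only tool), and your compactness argument for the set of pairs $(x,\sigma_r^x(s_0))$ is correct since $\ell(x,\sigma_r^x(s_0))\ge s_0$ is preserved under limits by the existence of the limiting connecting timelike geodesic. One small imprecision: the image of $\bigl(\bigcup_{x\in X}\{x\}\times K_{\tilde C}(x)\bigr)\times[0,s_0]$ under $\exp^g$ cannot lie in $I^+(X)$, since it contains $X$ itself at $s=0$; what you need (and have) is simply compactness of that image.
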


\begin{proof}
For $R$ large enough (depending on $\gamma(0)$) we shall show
 $(b^+_r)_{r \ge R}$ has a semiconcavity constant $C$ of which depends on $\gamma$ but not on $\gamma(0)$.
Fix a compact set $X \subset U$. Theorem \ref{T:Lipschitz}(ii) provides $R$ and $\tilde C$ such that for each $x \in X$ and $z =\gamma(r)$ with $r \ge R$,
there is a 
 (proper-time parameterized) maximizing geodesic segment $\sigma_r$ joining $\sigma_r(0)=x$ to $\sigma_r(a_r)=z$.
Theorem \ref{T:Lipschitz}(iii) asserts $|\dot \sigma_r(0)|_{\tilde g}  \le \tilde C$ for all $x \in X$ and $r \ge R$.
Since $a_r=\ell(x,\gamma(r)) \ge \ell(x,\gamma(R)) + r-R$ diverges as $r \to \infty$,  taking $R$ larger if necessary ensures $a_r \ge 1$ (uniformly with respect to $(x,r) \in X \times [R,\infty)$).

Now $K_{\tilde C} := \{ (v,y) \in TM \mid |v|_g=1, |v|_{\tilde g} \le \tilde C, y \in X\}$ is a compact set of unit timelike directions.
Among timelike geodesics with initial conditions in $K_{\tilde C}$, the time to the timelike cut-locus attains its minimum over
$K_{\tilde C}$ by the lower semicontinuity shown in \cite[Proposition 9.7]{BeemEhrlichEasley96} for 
{the globally hyperbolic} case (a).
This mininum value is positive; call it $t_0 \in (0,\infty]$ and fix $0<t<\min\{t_0,1\}$. The set $G = \{\exp_z sv \mid (v,z,s) \in K_{\tilde C} \times [0,t]\}$ is then compact and contains $\sigma_r({[0, t]})$ for all $x \in X$ and $r \ge R$.

Given $\sigma_r$ as above, fix $y=\sigma_r(t)$.
{Lemma \ref{L:upper support} asserts $u(x') = b^+_{r}(x)+\ell(x,y)-\ell(x',y) \ge b^+_{r}(x')$ holds for all $x'$ in a neighbourhood of $x$ and equality holds at $x'=x$; in other words, $u$ supports $b^+_{r}$ from above at $x$.  
Thus $b^+_{r}$ inherits} from $u$ at $x$ a semiconcavity constant $C(x,y)$ which depends continuously on its arguments in $\{\ell>0\}$
for case (a), according to Proposition~3.4 of \cite{McCann20}.
Taking $C = \max_{(x,y) \in G} C(x,y)$ then concludes the proof of case (a).  

Although the 
proposition last mentioned 
was proved under 
{global hyperbolicity} (a),  the conclusions we need can be extended to {the timelike geodesically complete case} 
(b) by the following observation.
The proof of Proposition~3.4 of \cite{McCann20} provides a semiconcavity constant $C$ of $b^+_r$ at $x=\sigma_r(0)$ given by a simple integral
which depends only on the geometry $(\tilde g,g)$ of $M$ along the proper-time parameterized timelike geodesic $\{\sigma_r(s)\}_{s \in [0,t]}$.  Since this geodesic lies in the compact set $G$,
this explicit formula is easy to bound, allowing us to conclude the same way whenever $t_0>0$.
If $t_0=0$, we 
set $t=1$ and argue as before after observing the resulting set $G$ is again compact since 
the exponential map is continuous and defined globally on the timelike tangent bundle in case (b).
\end{proof}

\begin{corollary}[{Unit gradients converge a.e.\ for Busemann limits}
]\label{C:semiconcave}
If a family $\{b^+_r\}_{r \ge R}$ is equi-semiconcave on an open set $U \subset M$, 
then pointwise convergence to a real-valued limit $b^+=\lim_{r\to \infty} b^+_r$ on $U$
implies locally uniform convergence of $b^+_r$ and pointwise convergence a.e. of $db^+_r$ on $U$.  Moreover, $b^+$ inherits the semiconcavity constants of 
$\{b^+_r\}_{r \ge R}$.  If the family consists of approximate Busemann functions
\eqref{b^+_r} and $M$ is an (a) globally hyperbolic or (b) timelike geodesically complete spacetime, then a.e. on $U$ their gradients are future-directed and $|db^+|=1=|db^+_r|$.
\end{corollary}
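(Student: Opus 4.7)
The plan is to address the three conclusions of the corollary in turn, relying on the standard calculus of semiconcave functions. For locally uniform convergence, I note that an equi-semiconcave family on $U$ is automatically equi-Lipschitz on every compact $X \subset U$: in a $\tilde g$-normal chart each $b^+_r$ decomposes locally as a concave function plus the explicit quadratic $\tfrac{C}{2}\tilde g(\cdot,\cdot)$, and a concave function on a slightly larger set is Lipschitz on $X$ with a constant controlled only by its oscillation, which in turn is bounded uniformly in $r$ thanks to the pointwise convergence to the finite limit $b^+$. Arzel\`a--Ascoli then upgrades pointwise to locally uniform convergence, and passing to the limit in the defining semiconcavity inequality at each $x \in U$ gives immediately that $b^+$ inherits the same constant $C$.

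For the almost-everywhere convergence of gradients, I would use the fact that every semiconcave function is differentiable at almost every point and, where it is, its gradient coincides with the unique element of the (compact, convex, nonempty) Fr\'echet superdifferential $\partial^+$. The key step is the upper semicontinuity of superdifferentials along equi-semiconcave sequences: if $p_r \in \partial^+ b^+_r(x)$ and $p_r \to p$ as $r \to \infty$, then $p \in \partial^+ b^+(x)$, obtained by passing to the limit in the upper-support inequality
\[
b^+_r(\exp^{\tilde g}_x w) \le b^+_r(x) + \langle p_r,w\rangle + \tfrac{C}{2}\tilde g(w,w)
\]
using equi-semiconcavity for the uniform quadratic error and locally uniform convergence for the left-hand side. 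At any $x$ where $b^+$ is differentiable one has $\partial^+ b^+(x)=\{db^+(x)\}$, so the bounded sequence $(db^+_r(x))$ can only accumulate at $db^+(x)$; since $b^+$ and each $b^+_r$ are differentiable a.e., this yields pointwise a.e.\ convergence $db^+_r \to db^+$ on $U$. This superdifferential semicontinuity is the main technical point I expect, but on a manifold it reduces to the classical Euclidean statement after working in a $\tilde g$-normal chart and invoking the same concave-plus-quadratic decomposition.

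For the final assertion, each approximate Busemann function reads $b^+_r(x) = \ell(\gamma(0),\gamma(r)) - \ell(x,\gamma(r))$, and for $x \in U$ and $r$ large enough Theorem~\ref{T:Lipschitz}(ii) supplies a proper-time parameterized maximizing timelike geodesic $\sigma$ from $x$ to $\gamma(r)$. At a differentiability point of $b^+_r$, the first variation of Lorentzian arclength against the upper support $x' \mapsto \ell(\gamma(0),\gamma(r)) - \ell(x',\gamma(r))$ (see Lemma~\ref{L:upper support}) identifies $\nabla b^+_r(x) = \dot\sigma(0)$, a future-directed $g$-unit timelike vector, so $|db^+_r|=1$ a.e.\ and $db^+_r$ is future-directed a.e. The conic confinement of Remark~\ref{R:conic intersections} keeps $db^+_r$ inside a fixed compact subset of the solid future hyperboloid bounded away from the null cone; hence the a.e.\ convergence of gradients from the previous step transfers these properties to $db^+$ and yields $|db^+|=1$ together with future-directedness at almost every $x \in U$.
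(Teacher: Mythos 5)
Your proposal is correct and follows essentially the same route as the paper: reduce to concave functions in a chart by subtracting a quadratic, deduce locally uniform convergence, inheritance of the semiconcavity constant, and a.e.\ gradient convergence (where the paper simply cites Rockafellar's Theorems 10.9 and 25.7, you reprove them via equi-Lipschitz bounds and superdifferential semicontinuity), then use the timelike maximizers supplied by Theorem~\ref{T:Lipschitz}(ii) to get $|db^+_r|=1$ a.e.\ and pass to the limit. The only cosmetic difference is that the paper obtains future-directedness from the steepness inequality \eqref{1-steep} and cites \cite{McCann20} for the eikonal property in case (a), whereas you use the first-variation identification $\nabla b^+_r(x)=\dot\sigma(0)$; the two arguments are equivalent here.
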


\begin{proof}
Semiconcavity implies that in any smooth coordinate chart,  the function becomes a concave function after subtraction of a multiple $C$ of a fixed parabola, depending on the coordinates.
Equi-semiconcavity of $\{b^+_r\}_{r \ge R}$ 
therefore implies locally uniform convergence of $b^+_r \to b^+$ on $U$ and convergence a.e.\ of their gradients,
as for concave functions; cf.\ Theorem 10.9 and the proof of Theorem 25.7 from Rockafellar \cite{Rockafellar70}.
The limit $b^+$ inherits the same constant $C$ of semiconcavity.
Semiconcavity also implies differentiability of $b^+_r$ outside a set of measure zero.
For approximate Busemann functions \eqref{b^+_r}, in the globally hyperbolic case (a)
it is well-known that $|db^\pm_r|=1$ where defined; Theorem 3.6 of~\cite{McCann20}. 
{Under timelike geodesic completeness (b) instead,} this also holds since we already know that $b_r^+$ is a.e.\ differentiable on $U$ and for any $q \in U$ there exists a timelike maximizer to $\gamma(r)$ (for large enough $r$, uniformly on compact subsets of $U$, {by Theorem
\ref{T:Lipschitz}(ii)}).
Thus $|db^+|=1$ a.e.\ on $U$.  The same estimates apply to $-b^-$.
The reverse triangle inequality implies 
\begin{equation}\label{1-steep}
b^\pm_r(y) - b^\pm_r(x) \ge \ell(x,y)
\end{equation}
for all $y \in J^+(x)$,  so $db^\pm_r$ and their limits are both future-directed (recall that the Lorentzian metric has signature $(+,-,\dots,-)$).
\end{proof}

\section{$p$-harmonicity of the Busemann function}

By linearizing the nonlinear inequalities $\square_p b^+ \le 0 \le \square_p b^-$ for $p<1$, in this section we show the sum $b^+-b^- \ge 0$ of the Busemann functions is a weak supersolution to a linear, uniformly elliptic equation.  Since $b^+-b^-$ vanishes on a line,  the strong maximum principle will then yield $b^+=b^-$ in the connected neighbourhood $U$ of this line provided by Theorem \ref{T:Lipschitz}.  Semiconcavity (and $p$-superharmonicity) of $b^+$ combines with semiconvexity (and $p$-subharmonicity) of $b^-$  to imply $b^+=b^- \in C^{1,1}_{loc}(U)$ (and $\square_p b^+=0$ respectively).  

Given a symmetric tensor field $a^{ij}$ 
--- bounded and measurable though not necessarily smooth --- on a Euclidean domain $\Omega \subset \R^n$,  
a Lipschitz function $u$ will be called a {\em weak} solution of $Lu \ge 0$ for the linear operator
\begin{equation}\label{L}
Lu := - \p_j( a^{ij} \p_i u) 
\end{equation}
(or {\em weak supersolution}) if and only if 
\begin{equation}\label{weak supersolution}
\int_\Omega  ( \p_i u) a^{ij} \p_j \phi 
\,dx \ge 0, \quad \forall\ 0 \le \phi \in C^1_0(\Omega);
\end{equation}
here $C^1_0(\Omega)$ denotes the set of continuously differentiable functions with compact support in $\Omega$.
(We prefer to require Lipschitz regularity of $u$ rather than the more customary Sobolev hypothesis $u \in W^{1,2}(\Omega)$.)
Similarly,  $u$ would be called a weak solution of $Lu \le 0$ (or weak {\em subsolution}) if the first inequality in \eqref{weak supersolution} were reversed.
It is called a {\em weak solution} of $Lu=0$ if both inequalities hold.
The operator $L$ is {\em uniformly elliptic} if $a^{ij} 
\in L^\infty(\Omega)$ and satisfy
\begin{equation}\label{elliptic}
a^{ij} v_i v_j \ge \lambda >0  
\end{equation}
for some $\lambda>0$ and all covectors $v \in \R^n$ with Euclidean unit norm.  
For a uniformly elliptic linear operator $L$ on a connected domain $\Omega \subset \R^n$, the remark immediately following
Theorem 8.19 of Gilbarg and Trudinger~\cite{GilbargTrudinger98} asserts that any continuous weak supersolution $u \ge 0$ which vanishes at an interior point
must vanish throughout $\Omega$.  In the proposition below we apply this to the sum $u=b^+-b^-$ of the $p$-superharmonic Busemann functions $b^+$ and $-b^-$
to conclude $b^+=b^-$ in a neighbourhood of the line which defines them.

If instead the coefficients $a^{ij} 
\in L^\infty$ are defined on the cotangent bundle,  hence depend on $du(x)$ as well as $x \in \Omega$,  the operator 
\begin{equation}\label{Q}
Qu := - \p_i( a^{ij} \p_j u) 
\end{equation}
becomes quasilinear.  The same definition \eqref{weak supersolution} of weak supersolution (and the corresponding definitions of weak subsolution and weak solution),
with $Q$ in place of $L$ continue to make sense.  This {turns out to be} the case for the $p$-d'Alembertian $Qu=-\square_pu := \nabla \cdot (|du|^{p-2}du)$, whose weak super and subsolutions
are called {\em $p$-superharmonic} and {\em $p$-subharmonic} respectively.  If $u$ is {simultaneously
$p$-superharmonic and $p$-subharmonic}, i.e. $Qu=0$ weakly, we say $u$ is {\em $p$-harmonic}.

On a spacetime satisying the strong energy condition,  we claim $b^+$ is $p$-superharmonic and $b^-$ is $p$-subharmonic near the line $\gamma$. We only prove the claim for $b^+$, the statement for $b^-$ follows by 
time-reversal symmetry.

Corollary \ref{L:harmonicity} and Proposition \ref{P:maximum principle} below require a weak reformulation which extends the smooth d'Alembert comparison theorem of Eschenburg \cite{Eschenburg88} past the cut locus. 
This is most naturally formulated in terms of our $p$-d'Alembertian; cf.~\eqref{weak supersolution}. 
Such an estimate was recently proved in much greater generality \cite{BeranOctet24+}.  However, for the current smooth setting the following proposition gives a much more direct proof based on the smooth calculation due to Eschenburg \cite{Eschenburg88}.
Integrating his estimate by parts requires some care due to the presence of cut points. On the other hand, by employing the semiconcavity of appropriate Lorentzian distance functions (as in the discussion before 
Proposition \ref{P:semiconcave}), we control the sign of any singular contribution to render it harmless.  Recall $u$ is {\em semiconvex} if $-u$ is semiconcave.

\begin{proposition}[Weak d'Alembert comparison]\label{Pr:dalembert comparison} Fix $0\neq p<1$ and a point $o\in M$. Suppose that there exists an open set  $U \subset I^-(o)$ such that $\ell(\cdot,o)$ is (real and) semiconvex on $U$, the intersection of the past timelike cut locus of $o$ with $U$ is relatively closed with measure zero in $U$, and $\ell(\cdot,o)$ is smooth outside that set {with unit timelike gradient}. Then every nonnegative $\phi\in C_0^1(U)$ satisfies
\begin{align*}
\int_{M} \left[\frac{(n-1)\phi}{\ell(\cdot,o)} + g\Big(d\phi, 
\frac{d\ell(\cdot,o)}{|d\ell(\cdot,o)|^{2-p}}\Big)\right] d\vg \ge 0.
\end{align*}
\end{proposition}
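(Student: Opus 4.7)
The plan is to set $d := \ell(\cdot,o)$ and $V := |dd|^{p-2}\,dd$, and observe that since $|dd| = 1$ on the smooth set $D := U \setminus N$ (where $N$ denotes the past timelike cut locus of $o$ in $U$) and $N$ has measure zero by hypothesis, $V = dd$ almost everywhere on $U$. Integration by parts therefore reduces the proposition to the distributional inequality
\[
\square d \le \frac{n-1}{d} \quad \text{on } U,
\]
for the wave operator $\square := g^{ij}\nabla_i\nabla_j$. I would prove this by splitting $\square d$ into its absolutely continuous and singular parts with respect to $d\vg$: the first is handled by the classical pointwise Eschenburg estimate, while the second requires the assumed semiconvexity of $d$.

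\textbf{Pointwise bound and exhaustion.} On $D$, the classical d'Alembert comparison of Eschenburg \cite[\S 5]{Eschenburg88}---obtained by tracing the Riccati equation along the unique maximizing timelike geodesic from $x\in D$ to $o$ under the strong energy condition---gives $\square d \le (n-1)/d$ pointwise; since $|dd|^{p-2} \equiv 1$ on $D$, this is the same as $\nabla\cdot V \le (n-1)/d$ there. Given $0 \le \phi \in C_0^1(U)$, I would exhaust $D$ from within by slightly-regularized tubular complements $\Omega_\delta = \{x\in U : \mathrm{dist}_{\tilde g}(x,N) > \delta\}$, chosen to have smooth boundary. Combining the pointwise bound, the product rule $\nabla\cdot(\phi V) = g(d\phi, V)+\phi\,\nabla\cdot V$, and Stokes' theorem then yields
\[
\int_{\Omega_\delta}\!\left[\frac{(n-1)\phi}{d} + g(d\phi,V)\right] d\vg \;\ge\; \int_{\partial\Omega_\delta} \phi\,\iota_V\,d\vg.
\]
The left-hand side converges to the target integral on $U$ by dominated convergence as $\delta\to 0$, so the whole proof reduces to showing the boundary integral has a nonnegative limit.

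\textbf{Main obstacle: sign of the boundary term via semiconvexity.} This will be the hardest step. My plan is to exploit the assumed semiconvexity of $d$ on $U$. At $\mathcal{H}^{n-1}$-almost every point of the cut locus $N$ (which, for the smooth metric $g$, is $\mathcal{H}^{n-1}$-rectifiable), $d$ extends smoothly to each side of a $C^1$-hypersurface piece of $N$ with conormal $\nu$; semiconvexity forces the jump of the normal derivative $[\partial_\nu d] \ge 0$. The decisive geometric fact is that $\nu$ is \emph{spacelike}: $[dd] = [\partial_\nu d]\,\nu$ is (up to a positive scalar) the difference $v_+^\flat - v_-^\flat$ of the $g$-duals of initial velocities of two distinct maximizing past-directed timelike geodesics from $o$ meeting at the cut point, and the reverse Cauchy--Schwarz inequality for unit future-timelike covectors forces such a difference to be spacelike, so $g^{-1}(\nu,\nu) < 0$. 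A computation in local normal coordinates then combines the contributions from the two sheets of $\partial\Omega_\delta$ lying on either side of $N$ to give
\[
\lim_{\delta \to 0}\int_{\partial\Omega_\delta} \phi\,\iota_V\,d\vg = -\int_N \phi\,g^{-1}(\nu,\nu)\,[\partial_\nu d]\,d\mathcal{H}^{n-1} \ge 0,
\]
the two factors $g^{-1}(\nu,\nu)$ and $[\partial_\nu d]$ having opposite sign. Lower-dimensional strata of $N$ carry no $\mathcal{H}^{n-1}$-mass and drop out of the limit. As an alternative (perhaps cleaner) route, the entire singular-part control can be carried out directly on the distributional Hessian of the semiconvex function $d$: its singular part $(D^2 d)_{\mathrm{sing}}$ is a nonnegative-semidefinite matrix-valued Radon measure, and its contraction with $g^{ij}$ in the spacelike conormal direction is nonpositive, yielding $(\square d)_{\mathrm{sing}} \le 0$ and hence the distributional inequality.
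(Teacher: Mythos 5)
Your high-level decomposition — handle the absolutely continuous part of $D^2\ell$ via Eschenburg's pointwise estimate, and control the singular part using semiconvexity — matches the paper's, but the way you treat the singular part diverges in a manner that both changes the nature of the proof and opens genuine gaps. Your first move, reducing to $p=2$ by noting $|d\ell|=1$ a.e.\ so $|d\ell|^{p-2}d\ell = d\ell$ a.e., is formally correct as a restatement of the conclusion, but it discards the proof's key tool. The paper never performs this reduction: it mollifies $u=\ell(\cdot,o)$, writes $\square_p u_\varepsilon$ in nondivergence form, and exploits the fact that the coefficient matrix $(2-p)\,\partial^i u\,\partial^j u/|du|^2 - g^{ij}$ is uniformly \emph{positive-definite} for $p<1$ when $du$ is timelike. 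Tracing a positive-definite matrix against the nonnegative-definite singular part of $D^2 u$ (supplied by semiconvexity) gives the favourable sign by pure linear algebra, with no information about the direction or support of the singular measure; the $\varepsilon\to 0$ limit is then taken with Fatou. By passing to $p=2$ you replace this matrix with $-g^{ij}$, which is Lorentzian-indefinite, and the nonnegative-definiteness of $(D^2\ell)^\perp$ alone no longer forces a sign.

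To recover the sign you are then forced to argue that the singular measure is concentrated in spacelike conormal directions. The reverse Cauchy--Schwarz argument you sketch (the jump in $d\ell$ being the difference of two unit future-timelike covectors, hence spacelike) is correct at a jump point, but it is only $\mathcal H^{n-1}$-a.e.\ meaningful if you additionally know: (i) the Lorentzian cut locus is $\mathcal H^{n-1}$-rectifiable, (ii) $\ell$ has two-sided $C^1$ extensions across $\mathcal H^{n-1}$-a.e.\ cut point so that there genuinely is a rank-one jump measure there, and (iii) the remaining (lower-dimensional or Cantor-type) singular mass contributes nothing. None of these is among the hypotheses of the proposition — which grant only that the cut locus is relatively closed with measure zero and that $\ell$ is semiconvex — and establishing them (particularly a Lorentzian analogue of Itoh--Tanaka rectifiability and the two-sided extension structure) is substantial work in its own right. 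Your Stokes-theorem exhaustion version carries the further delicate issue of justifying convergence of the boundary integrals $\int_{\partial\Omega_\delta}\phi\,\iota_V$ to the surface integral over $N$. The paper's use of $p<1$ is designed precisely to sidestep all of this cut-locus structure; your route trades that algebraic trick for a geometric argument that, while plausibly true, would require filling these gaps before the proof is complete.
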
 

\begin{proof} We write $u := \ell(\cdot,o)$. By partition of unity, we may and will assume $\phi \in C^1_0(U)$ is supported on a fixed chart. Since $u$ is semiconvex, 
partitioning further if necessary, we may also assume the existence of a smooth function $v$ such that $u+v$ is a convex function of the coordinates in the usual Euclidean sense \cite[Satz 2.3]{Bangert79} . It is also not restrictive to assume that the coordinate representation $(2-p)\,(\p^i u)\,(\p^j u)/\vert du\vert^2 - g^{ij}$ of the tensor from \eqref{nondivergence} is uniformly 
positive-definite on  the support of~$\phi$. By \cite[p.\ 312]{Bangert79}, the constructions below will not depend on the choice of $v$. To relax our notation, we regard $M$ as being covered by a single chart from which it inherits a Euclidean metric.

Standard distribution theory implies the distributional Euclidean Hessian of $u+v$ is given by a 
nonnegative-definite matrix-valued Radon measure $D^2(u+v)$. In turn, as $v$ is smooth the distributional Hessian of $u$ is a signed Radon measure $D^2u$ satisfying
\begin{align*}
    \int_M \partial_i\partial_j\phi\,u\, d x = \int_M \phi\,d(D^2u)_{ij}.
\end{align*}
Moreover, the $dx$-singular part $(D^2 u)^{\perp}$ of $D^2u$ is nonnegative-definite.

Given $\varepsilon > 0$ let $\rho_\varepsilon$ be a standard convolution kernel, and set $u_\varepsilon := u * \rho_\varepsilon$. By the Lipschitz regularity assumed of $u$ on $U$, for sufficiently small $\varepsilon > 0$ the matrix $\smash{(2-p)\,
{\partial_{i} u_\varepsilon\partial_{j} u_\varepsilon}
/\vert du_\varepsilon\vert^2 - g_{ij}}$ stays uniformly positive definite and $\vert d u_\varepsilon\vert > 0$ on the support of $\phi$. Recalling \eqref{nondivergence}, the $p$-d'Alembertian of $u_\varepsilon$ is
\begin{align}\label{Eq:dAlem}
    \square_pu_\varepsilon = \vert du_\varepsilon\vert^{p-2}\left[(2-p)g^{ik}g^{jl}\frac{\partial_{k} u_\varepsilon\partial_{l} u_\varepsilon}{\vert du_\varepsilon\vert^2}-g^{ij}\right]\left[\partial_i\partial_ju_\varepsilon - \Gamma_{ij}^k\,\partial_ku_\varepsilon\right].
\end{align}
Since $u_\varepsilon$ is smooth, we can transform the right-hand side into its divergence form and use integration by parts to obtain
\begin{align}\label{Eq:IBPf}
     \int_M (g^{ij}\,\partial_i\phi\,\frac{\partial_i u_\varepsilon}{\vert du_\varepsilon\vert^{2-p}})\,\sqrt{\vert g\vert}\,d x = \int_M (\phi\,\square_pu_\varepsilon)\,\sqrt{\vert g\vert}\,dx.
\end{align}
On the other hand, convolution commutes with differentiation; in conjunction with the nonnegativity of $(D^2u)^\perp$ this entails
\begin{align*}
    \partial_i\partial_ju_\varepsilon = \rho_\varepsilon * (D^2u)_{ij} \geq \rho_\varepsilon * (D^2 u)_{ij}^{\mathrm{a.c.}}. 
\end{align*}
Here $(D^2 u)^{\mathrm{a.c.}}$ is the $dx$-absolutely continuous part of $D^2u$, and the last inequality is understood in the sense of 
positive-semidefiniteness of matrix-valued distributions. Since the Hessian $\partial_i\partial_j u_\varepsilon$ is multiplied by a symmetric and 
positive-definite matrix in \eqref{Eq:dAlem} which admits a square-root, the previous observation combines with \eqref{Eq:IBPf} and the nonnegativity of $\phi$ to give
\begin{align*}
   \int_M (g^{ij}\, \partial_i\phi\,\frac{\partial_i u_\varepsilon}{\vert du_\varepsilon\vert^{2-p}})\,\sqrt{\vert g\vert}\,d x \geq \int_V (\phi\,\square_pu_\varepsilon)\,\sqrt{\vert g\vert}\,dx.
\end{align*}
Here $V$ is the open subset of $U$ where $u$ is smooth. Its complement relative to $U$ is negligible 
since we assumed the past timelike cut locus of $o$ has measure zero in $U$.

As $\varepsilon \to 0$, the left-hand side converges to
\begin{align*}
    \int_M g^{ij}\,\partial_i\phi\,\frac{\partial_i u}{\vert du\vert^{2-p}}\,\sqrt{\vert g\vert}\,d x 
\end{align*}
by the dominated convergence theorem, since $\phi \in C^1_0(U).$ 
By the same argument, the term containing the Christoffel symbols in \eqref{Eq:dAlem} in the integral on the right-hand side converges. Lastly, since $u$ differs from a convex function by the addition of a smooth function $v$ whose Hessian is bounded on the support of $\phi$, Fatou's lemma applies and yields the following limiting estimate for the right-hand side of the previous inequality:
\begin{align*}
\liminf_{\varepsilon \to 0} \int_V (\phi\,\square_pu_\varepsilon)\,\sqrt{\vert g\vert}\,dx \geq\int_V(\phi\,\square_pu)\,\sqrt{\vert g\vert}\,dx.
\end{align*}

Finally, by Eschenburg's smooth d'Alembert comparison \cite[§5]{Eschenburg88} (note the different sign conventions on the metric tensor) and the definition of $u$, on the given set $V$ its $p$-d'Alembertian is bounded from above by $(n-1)/\ell(\cdot,o)$. This implies
\begin{align*}
 \int_M g^{ij}\,\partial_i\phi\,\frac{\partial_i u_\varepsilon}{\vert du_\varepsilon\vert^{2-p}}\,\sqrt{\vert g\vert}\,d x  \geq -\int_M \frac{\phi(n-1)}{l(\cdot,o)}\,\sqrt{\vert g\vert}\,dx,
\end{align*}
as desired.
\end{proof}

\begin{corollary}[Busemann function $b^+$ is $p$-superharmonic]
\label{L:harmonicity}
Fix $0 \ne p<1$. 
Under the hypotheses of Theorem \ref{T:Lorentzian splitting}(a) or (b) on $(M,g)$ and $\gamma$,
if $V \subset M$ is a domain on which the functions $\{b^+_r\}_{r \ge R}$ of \eqref{b^+_r} are equi-semiconcave when $R$ is sufficiently large, and if in addition 
for all $p \in V$ and $r \ge R$ there exists a timelike maximizing geodesic from $p$ to $\gamma(r)$,
then $b^+ = \lim_{r\to \infty} b^+_r$ 
is semiconcave, $p$-superharmonic and $|db^+|=1$ on $V$.
\end{corollary}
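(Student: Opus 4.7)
The plan is to obtain $p$-superharmonicity of $b^+$ by applying the weak d'Alembert comparison (Proposition~\ref{Pr:dalembert comparison}) to $u_r := \ell(\cdot,\gamma(r))$, rewriting the resulting inequality in terms of $b^+_r = \ell(\gamma(0),\gamma(r)) - u_r$, and then passing to the limit $r\to\infty$. The remaining assertions (semiconcavity and $|db^+|=1$) are immediate consequences of Corollary~\ref{C:semiconcave} once equi-semiconcavity on $V$ is in hand.

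First I would verify that, for each $r\ge R$, the hypotheses of Proposition~\ref{Pr:dalembert comparison} hold with $o=\gamma(r)$ on the open set $V$. Since every $x\in V$ admits a timelike maximizing geodesic to $\gamma(r)$ we have $V\subset I^-(\gamma(r))$, so $u_r$ is finite on $V$; semiconvexity of $u_r$ on $V$ follows from Proposition~3.4 of \cite{McCann20} (which was recalled in the paragraph preceding Proposition~\ref{P:semiconcave}). The past timelike cut locus of $\gamma(r)$ is closed in $V$ and has measure zero by the standard cut-locus theory on globally hyperbolic domains, and $u_r$ is smooth with unit timelike gradient on its complement. Proposition~\ref{Pr:dalembert comparison} then yields, for every nonnegative $\phi\in C^1_0(V)$,
\begin{equation*}
\int_{M} g\!\left(d\phi,\; \frac{du_r}{|du_r|^{2-p}}\right) d\vg \;\ge\; -\int_M \frac{(n-1)\phi}{u_r}\,d\vg.
\end{equation*}
Since $db^+_r = -du_r$ and the map $v\mapsto |v|^{p-2}v$ is odd on the future timelike cone (using $|du_r|=|db^+_r|$ a.e.\ on $V$), this rewrites as
\begin{equation*}
\int_{M} g\!\left(d\phi,\; |db^+_r|^{p-2}\,db^+_r\right) d\vg \;\le\; \int_M \frac{(n-1)\phi}{\ell(\cdot,\gamma(r))}\,d\vg .
\end{equation*}

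Next I would pass to the limit $r\to\infty$. Corollary~\ref{C:semiconcave} supplies locally uniform convergence $b^+_r\to b^+$, pointwise a.e.\ convergence $db^+_r\to db^+$ on $V$, and $|db^+_r|=1=|db^+|$ a.e. In particular $|db^+_r|^{p-2}db^+_r\to |db^+|^{p-2}db^+$ a.e. To justify passage to the limit under the integral I would combine the a.e.\ equality $|db^+_r|=1$ with Remark~\ref{R:conic intersections}: the gradients $db^+_r$ lie in the compact set obtained by intersecting the future hyperboloid $g(v,v)\ge 1$ with the ellipsoid $\tilde g(v,v)\le \tilde C$, so the integrands are uniformly bounded on $\spt\phi$ and dominated convergence applies to the left-hand side. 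For the right-hand side, on any compact $X\subset V$ the reverse triangle inequality gives $\ell(x,\gamma(r))\ge r-\mathrm{const}$ uniformly in $x\in X$ once $r$ is large, so the integral tends to $0$. This yields
\begin{equation*}
\int_M g\!\left(d\phi,\; |db^+|^{p-2}\,db^+\right) d\vg \;\le\; 0
\end{equation*}
for every nonnegative $\phi\in C^1_0(V)$, which is the weak formulation of $\square_p b^+\le 0$. Finally, semiconcavity of $b^+$ and the identity $|db^+|=1$ a.e.\ are immediate from Corollary~\ref{C:semiconcave} applied to the equi-semiconcave family $\{b^+_r\}_{r\ge R}$.

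The step I expect to be most delicate is justifying the dominated-convergence passage in the divergence-form inequality: because the weak statement involves only a single integration by parts (unlike the Riemannian $p=2$ case), one really needs a.e.\ convergence of $db^+_r$ together with a uniform $L^\infty$ bound on $|db^+_r|^{p-2}db^+_r$. The a.e.\ convergence is supplied by equi-semiconcavity, while the uniform bound rests on the conic confinement described in Remark~\ref{R:conic intersections}, which in turn uses that $|db^+_r|=1$ almost everywhere and that the Lipschitz constants of the $b^+_r$ are uniformly controlled on compact subsets of $V$.
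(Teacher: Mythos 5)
Your plan is essentially the paper's: apply Proposition~\ref{Pr:dalembert comparison} to $\ell(\cdot,\gamma(r))$, rewrite the resulting inequality in terms of $b^+_r$, and then pass to the limit $r\to\infty$ using the $L^\infty$ bound and a.e.\ convergence of $db^+_r$ from Corollary~\ref{C:semiconcave} together with dominated convergence, noting that the $(n-1)/\ell(\cdot,\gamma(r))$ term vanishes in the limit because $\ell(\cdot,\gamma(r))\to\infty$ uniformly on $\spt\phi$. That part of the argument is sound, and your comment about why a single integration by parts forces a.e.\ convergence of the gradients is exactly the point the authors emphasize in their introductory overview.

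However, there is a genuine gap in the verification of the hypotheses of Proposition~\ref{Pr:dalembert comparison}: you justify that the past timelike cut locus of $\gamma(r)$ is relatively closed with measure zero by ``standard cut-locus theory on globally hyperbolic domains,'' i.e.\ only under hypothesis~(a). The corollary is asserted under (a) \emph{or} (b), and in the timelike-geodesically-complete case (b) this step is non-automatic: McCann's Lemma~2.3 (relative closedness) and Theorem~3.5 (zero volume) are proved assuming global hyperbolicity, and the exponential map, the cut function, and the existence of maximizers all behave differently in case~(b). The paper devotes a separate paragraph to this: it checks that the proof of Theorem~3.5 in \cite{McCann20} also works under (b) for $r\ge R$, and establishes relative closedness via continuity of $G(v)=\exp s(v)v$, combining upper semicontinuity of the cut function $s$ on the past unit observer bundle (\cite[Proposition~9.5]{BeemEhrlichEasley96}) with lower semicontinuity obtained by inspecting the proof of \cite[Proposition~9.7]{BeemEhrlichEasley96}, which uses global hyperbolicity only to furnish maximizing geodesics from $\gamma(r)$ to each $x\in V$ --- exactly what the corollary's extra hypothesis guarantees. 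In your proposal the existence-of-maximizers hypothesis is used only to conclude $V\subset I^-(\gamma(r))$, which is much weaker than what it is actually needed for; without the case~(b) cut-locus argument, the proof covers only half of the stated corollary.
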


\begin{proof} The reverse triangle inequality \eqref{rti} shows
$I^-(\gamma(r))$ increases with $r$;  since $b^+_r(x) = +\infty$ unless $x \in I^-(\gamma(r))$, we have $V \subset \cup_{r>0} I^-(\gamma(r))$.
Using normal coordinates around $\gamma(r)$
shows $b^+_r(\cdot)=const(r)-\ell(\cdot,\gamma(r))$ to be smooth on $V$, except on the closure of the timelike cutlocus.

{Under global hyperbolicity} (a),  the timelike cutlocus of 
$\gamma(r)$ intersects $I^-(\gamma(r))$ in a 
relatively closed set \cite[Lemma 2.3]{McCann20} 
which has zero volume as a consequence of Theorem 3.5
and the 
approximate second-differentiability a.e.\ of the semiconvex function $ \ell(\cdot, \gamma(r)) = const(r)-b_r^+(\cdot)$ 
described following Definition 3.8 (ibid).
Proposition \ref{Pr:dalembert comparison} then applies to the approximate Busemann function $b_r^+$ for fixed $r>0$ and yields
\begin{equation}\label{weak p-comparison}
\int_{M} \left[\frac{(n-1)\phi}{\ell(\cdot,\gamma(r))} - g\Big(d\phi, 
\frac{db^+_r}{|db^+_r|^{2-p}}\Big)\right] d\vg \ge 0
\end{equation}
for  every nonnegative $\phi \in C^1_0(I^-(\gamma(r)))$. 
The convergence provided by Corollary \ref{C:semiconcave} allows us to take $r \to \infty$ in \eqref{weak p-comparison} to get $p$-superharmonicity of $b^+$, semiconcavity and $|db^+|=1$ on any open subset $X$ with compact closure in $V$ by Lebesgue's dominated convergence theorem, where the monotone limit $\lim_{r\to \infty}\ell(\cdot,\gamma(r))= \infty$ and the compact support of $\phi \in C^1_0(X)$ have been used. 
This concludes case (a).

{The timelike geodesically complete} case (b) will follow similarly once we have verified that
the timelike cutlocus of $\gamma(r)$ intersects $I^-(\gamma(r))$
in a relatively closed set of zero volume for each $r \ge R$.
Although Theorem 3.5 (ibid) is stated under hypothesis (a),  inspection
of its proof reveals it applies equally well to (b) as soon as $r \ge R$.
This is not true of Lemma 2.3 (ibid) however, so we must find another argument to show the intersection in question is relatively closed.
Recall the (past) timelike cut locus is contained in the graph of 
$G(v) := \exp s(v)v \in M$ over the past unit observer bundle
$T^-_{1}M := \{v \in TM \mid H(-v)=1 \}$, where $s:T^-_{1}M \longrightarrow [0,\infty]$ is upper semicontinuous according to
\cite[Proposition 9.5]{BeemEhrlichEasley96} in case (b).  We shall complete the proof by arguing that $s$ is also lower semicontinuous on the intersection of $T^-_1 M$ with $\exp_{\gamma(r)}^{-1} V \subset T_{\gamma(r)} M$ provided $r \ge R$,  hence $G$ is continuous there.
In case (a) this would follow from \cite[Proposition 9.7]{BeemEhrlichEasley96}. However, the proof of that proposition reveals global hyperbolicity is used only to guarantee the existence of a maximizing geodesic linking $\gamma(r)$ to each $x \in V$,  which the present context is guaranteed by hypothesis. Case (b) is therefore resolved.
\end{proof}

\begin{proposition}[Strong tangency principle]\label{P:maximum principle}
If the spacetime $(M,g)$ of Proposition \ref{P:semiconcave} satisfies the strong energy condition, then 
$b^\pm = \lim_{r\to \infty} b^\pm_r$ from \eqref{b^+_r}--\eqref{b^-_r}
satisfy $b^+=b^- \in C^{1,1}_{loc}(U)$ and $|db^\pm |=1$ and are $p$-harmonic for all $0\ne p<1$ on a 
neighbourhood $U$ of the line $\gamma$.  
\end{proposition}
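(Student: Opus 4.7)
The plan is to reduce the identity $b^+=b^-$ to the strong minimum principle for a suitably linearized, uniformly elliptic, divergence-form operator, and then to upgrade regularity via two-sided semiconcavity. First, applying Corollary \ref{L:harmonicity} both to $\gamma$ and (symmetrically) to its time-reversal shows that on the open neighbourhood $V$ of $\gamma$ provided by Proposition \ref{P:semiconcave}, the function $b^+$ is weakly $p$-superharmonic while $b^-$ is weakly $p$-subharmonic; both have future-directed timelike gradients of unit $g$-norm almost everywhere, and inherit semiconcavity and semiconvexity respectively from Proposition \ref{P:semiconcave}. Monotonicity \eqref{ordering} makes $u := b^+ - b^- \geq 0$ a continuous function on $V$ which vanishes identically along $\gamma$.

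Next I linearize. Subtracting the two weak inequalities and applying the fundamental theorem of calculus to $v \mapsto F^i(v) := |v|^{p-2} g^{ij} v_j$ along the segment $v_t := t\,db^+ + (1-t)\,db^-$ rewrites the resulting differential inequality for $u$ as
\[
\int_M a^{ij}(x)\, \p_j u\, \p_i \phi\, d\vg \geq 0 \qquad \forall\, 0 \leq \phi \in C^1_0(V),
\]
where $a^{ij}(x) := \int_0^1 (D^2 H)^{ij}(v_t)\, dt$ is the covector-Hessian of the Hamiltonian $H$ from \eqref{Hamiltonian} averaged along $v_t$. The key claim is uniform positive-definiteness of $a^{ij}$ on compact subsets of $V$. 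Convexity of the future cone and the reverse Cauchy--Schwarz inequality ensure each $v_t$ is future-timelike with $|v_t|_g \geq 1$, while Theorem \ref{T:Lipschitz} and Remark \ref{R:conic intersections} confine $v_t$ to a compact subset of the future cone uniformly on compact subsets of $V$. Decomposing an arbitrary covector as $w = \alpha\, v_t/|v_t|_g + w_\perp$ with $w_\perp$ orthogonal to $v_t$ in the $g$-induced inner product on covectors then gives
\[
a^{ij}(x)\, w_i w_j = \int_0^1 |v_t|^{p-2}\bigl[(1-p)\alpha^2 + |w_\perp|_E^2\bigr]\, dt,
\]
which for $p<1$ is bounded below by a positive multiple of the squared Euclidean norm of $w$, uniformly on compact subsets of $V$. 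Absorbing $\sqrt{|g|}$ into the coefficients then realizes $u$ as a continuous weak supersolution of a uniformly elliptic linear divergence-form operator in the sense of \eqref{weak supersolution}.

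The strong minimum principle recalled after \eqref{elliptic} (the remark following Theorem 8.19 of Gilbarg and Trudinger \cite{GilbargTrudinger98}) now forces $u \equiv 0$ on a connected neighbourhood $U \subset V$ of $\gamma$. Hence $b^+ = b^-$ on $U$; being simultaneously semiconcave and semiconvex, this common function lies in $C^{1,1}_{loc}(U)$, which upgrades $|db^\pm|_g = 1$ from almost everywhere to everywhere on $U$. The $p$-harmonicity of $b^\pm$ for every $0 \neq p < 1$ then follows because Corollary \ref{L:harmonicity} (applied to both $\gamma$ and its time-reverse) guarantees the relevant weak super- and sub-harmonicity on $U$ for every such $p$ simultaneously. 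The main obstacle is securing the uniform ellipticity of the linearization: this depends crucially on keeping $db^\pm$ bounded both away from the null cone and in the $\tilde g$-norm, which is exactly what the Lipschitz bound of Theorem \ref{T:Lipschitz} and the a.e.\ unit-norm identity of Corollary \ref{C:semiconcave} together secure; without both, $a^{ij}$ could degenerate along the light cone and the linear maximum principle would be unavailable.
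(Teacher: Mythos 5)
Your proposal is correct and follows essentially the same route as the paper: write $u=b^+-b^-\ge 0$, linearize the $p$-d'Alembertian along the segment joining $db^-$ to $db^+$ to exhibit $u$ as a continuous weak supersolution of a uniformly elliptic linear divergence-form operator with coefficients $\int_0^1 D^2H(v_t)\,dt$ (the paper's \eqref{A matrix}), invoke the strong maximum principle of Theorem~8.19 in \cite{GilbargTrudinger98} since $u$ vanishes on $\gamma$, and then use two-sided semiconcavity and two-sided $p$-super/subharmonicity to get $C^{1,1}_{loc}$, $|db^\pm|=1$, and $p$-harmonicity. The only variation is your verification of uniform ellipticity via the Lipschitz bound and Remark~\ref{R:conic intersections} (reverse triangle inequality giving $|v_t|\ge 1$ plus compact confinement in the cone) rather than the paper's semiconcavity-plus-Fermi-coordinates argument, and this is exactly the alternative the paper's Remark on Ellipticity sanctions.
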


\begin{proof}
Proposition \ref{P:semiconcave},
Corollary~\ref{L:harmonicity} and Theorem \ref{T:Lipschitz}(ii)
combine with Corollary \ref{C:semiconcave} and time-reversal symmetry to show 
$-b^-$ and $b^+$ are $p$-superharmonic, semiconcave, and have unit timelike gradients, past-directed in case of $-b^-$ and future-directed in case of $b^+$.
Thus  $u:=b^+- b^-$ and $b(t) := b^- + tu$ with $0 \le \phi \in C^1_0(U)$ yield
\begin{align*}
0 & \le- \int_M d\vg \int_0^1   \frac{d}{dt} g(d\phi, |db|^{p-2} db) dt
\\ &= -\int_U d\vg \int_0^1  |db|^{p- 2} g(d\phi, [(p-2)\frac{db}{|db|} \otimes \frac{\nabla b}{|\nabla b|}+I] d u) dt
\\ &= +\int_\Omega dx \sqrt{|g|} \p_i\phi \p_j u
\int_0^1  |db|^{p- 2}\left[
(2-p) \frac{\p^i b \p^jb}{|d b|^2} - g^{ij} \right] dt
\end{align*} 
where the last line is expressed in a coordinate chart;
we assume $\spt \phi$ is supported in such a chart without loss of generality. Viewing the coefficients as `frozen' shows $u$ is a weak supersolution $Lu\ge0$ of the {\em linear} operator given in divergence form \eqref{L}--\eqref{weak supersolution} by
\begin{align}
\label{A matrix}
a^{ij}(x) &= {\sqrt{|g|}}\int_0^1  |db|^{p- 2}\left[
(2-p) \frac{\p^i b \p^jb}{|d b|^2} - g^{ij} \right] dt 
\end{align}
Choosing Fermi coordinates near the line $\gamma$,
 our signature $(+, -, \dots, -)$ of $g$ with $0 \ne p<1$
 make it easy to check bounded measurability and uniform ellipticity \eqref{elliptic} of these coefficients,
taking $U$ smaller if necessary: along the line $\gamma(r)$ where $db^\pm =d\gamma/dr$,  the expression in square brackets becomes the diagonal matrix $diag(1-p,1,\ldots,1)$;
near~$\gamma$, semiconcavity ensures that the gradients $db^\pm$ are not very different from $d\gamma/dr$ either, Theorems 24.4 and 25.1  of \cite{Rockafellar70}.
 On the other hand,  $u \ge 0$ throughout $U$ and vanishes on $\gamma$, according to \eqref{ordering}.
Thus $u=0$ throughout a neighbourhood $U$ of $\gamma$ by the strong maximum principle, Theorem~8.19 of~\cite{GilbargTrudinger98}.
Now $b^+=b^-$ is both semiconvex and semiconcave, hence $b^+ \in C^{1,1}_{loc}(U)$; similarly $b^+=b^-$ is both $p$-super and $p$-subharmonic,  hence $p$-harmonic.
\end{proof}

\begin{remark}[Ellipticity]
Although it is easier to verify the ellipticity claimed using semiconcavity of $\pm b^\pm$,  it can alternatively be deduced from Theorem \ref{T:Lipschitz} using Remark \ref{R:conic intersections}, which ensures the Clarke subdifferential of $b(t)$ remains bounded at each point on $\gamma$.   Here the Clarke subdifferential~\cite{Clarke83} of $b(t)$ at $x$ refers to the closed convex hull of limits of $db(t)(x_k)$ along sequences $x_k \to x$ of points in $M$ of differentiability of $b(t)$.
\end{remark}

\begin{remark}[Higher regularity]
Although it might be possible to obtain higher regularity for $p$-harmonic $C^{1,1}_{loc}$ functions using 
Evans's \cite{Evans82} or Krylov's \cite{Krylov82} techniques, 
it does not follow from their stated results and we were not successful in adapting their methods.  Instead we shall establish it in the next section using a radically simpler approach.
\end{remark}

\section{A Bochner-Ohta identity of homogeneity $2p-2<0$}

In this section we establish a nonlinear
Bochner-Ohta formula on Lorentzian spacetimes;
this will eventually imply that the Levi-Civita Hessian of our $p$-harmonic Busemann function $b^+$ vanishes.
After discovering this formula,  we realized that 
a large family of similar identities were
previously established in Theorem 4.4 of Ohta \cite{Ohta14h}
for Hamiltonians which are smooth and strongly convex away from the zero section of a manifold admitting a Riemannian structure.
Although our result is similar in spirit, 
it complements them in the sense that our Hamiltonian, being adapted to the Lorentzian setting,
satisfies neither the smoothness nor uniform convexity stipulated there.
In particular, our more specialized setting allows us to give a simple statement and proof in terms of standard differential geometric concepts; cf.~Remark~\ref{R:simpler but longer}.

On our $n$-dimensional, signature $(+, -, \cdots, -)$ spacetime $(M,g)$,  the Levi-Civita connection is denoted by $\nabla$. 
Equip the cotangent bundle with a Hamiltonian $H(v,x) = f(g(v,v)/2)$
on timelike 
future co-vectors, where it depends smoothly on the Lorentz norm of $v$, and is irrelevant elsewhere.  
Denote derivatives of $H$ with respect to $v=(v_1,\ldots,v_n)$ by $DH \in TM$, having components $H^i = \frac{\pt H}{\pt v_i}$,
and the Hessian $D^2H=(H^{ij})_{1\le i,j \le n}$ and higher $v$ derivatives of $H$ similarly.
For a function $u \in C^3(M)$
whose gradient is future-directed and timelike everywhere,  
the identity we derive in  \eqref{Bochner2} is the following:
\begin{align}
&\nabla \cdot [(D^2 H|_{\d u}) \d (H|_{\d u})] -  
(DH) \d [ \nabla \cdot (DH|_{\d u})]
\nonumber \\ \label{Bochner} 
&= \mathrm{Tr}[(D^2H)(\nabla^{2}u)(D^2H) (\nabla^2 u)] + \Ric(DH,DH), 
\end{align}
where $H$ and its derivatives are all tacitly composed with $\d u$,
adjacent tensors are contracted in the obvious way (see \eqref{Bochner2}), and $\nabla^2 u=(\nabla_i\nabla_j u)_{1\le i,j\le n}$ denotes the Levi-Civita Hessian of $u$. 
(We use $\d$ here simply to denote the differential of a function. On an orientable spacetime, we can also use the adjoint operator $\d^*$ to denote divergence instead of $\nabla \cdot$.)

The relevance of this identity is the following. Choose $H(v,x)$ from \eqref{Hamiltonian}
with $0\neq p<1$, in which case the identity we establish has homogeneity $2p-2<0$.
If  $u$ is such that $du$ has constant Lorentz norm and $\square_p u = 0$, the first two terms vanish, since $\nabla \cdot (  DH|_{du})=\square_p u=0$.
The strong energy condition \eqref{strong energy} 
combined with convexity of $H$ on timelike future covectors $v$ makes the right hand side of \eqref{Bochner} strictly positive unless the Hessian of $u$ vanishes identically (and the timelike Ricci curvature vanishes in direction $DH$ or equivalently, $\d u$).  
Applied to the $p$-harmonic function $u=b^+$,  this identity yields the desired linearity and smoothness of the Busemann function.

\begin{lemma}[A Lorentzian Bochner-Ohta identity] \label{L:Bochner-Ohta}\ \\
If $H(v,x) = f(v_iv_j g^{ij}(x)/2)$
on the timelike future bundle of covectors to a spacetime $(M,g)$ with $f \in C^{3}((0,\infty))$, 
and the differential of $u \in C^3(M)$ is future-directed and timelike everywhere,  then evaluating $H$ and all its derivatives at 
$du$ 
yields 
\begin{align}
\label{Bochner2} & 
\nabla_i (H^{ij}|_{\d u} \nabla_j (H|_{\d u})) - H^i \nabla_i (\nabla_j (H^j |_{\d u})) 
=H^{ij}u_{jk} H^{kl}u_{li} + R_{ij}H^iH^j, 
\end{align}
where superscripts denote derivatives of $H(v,x)$ with respect to components of the covector $v=(v_1,\ldots,v_n)$,
subscripts denote covariant derivatives with respect to the Levi-Civita connection, except $R_{ij}$ is the Ricci curvature tensor,
and the Einstein summation convention holds. 
\end{lemma}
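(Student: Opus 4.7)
The plan is to verify \eqref{Bochner2} pointwise. Since both sides are scalars built from tensor contractions, it is enough to work at an arbitrary $x_{0}\in M$ in normal coordinates centred there, in which $\Gamma^{i}_{jk}(x_{0})=0$ and $\pt_{l} g_{ij}(x_{0})=0$; the identity then reduces to a clean expansion followed by a single application of the Ricci identity.

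First I would establish a covariant chain rule for $v$-derivatives of $H$. Because $H(v,x)$ depends on $v$ and $x$ only through the scalar $g^{ij}(x)v_{i}v_{j}$, the $v$-derivatives $H^{i_{1}\ldots i_{s}}=\pt^{s}H/\pt v_{i_{1}}\cdots \pt v_{i_{s}}$ are totally symmetric $(s,0)$-tensors, and a direct computation in normal coordinates at $x_{0}$ yields
\[
\nabla_{l}\bigl(H^{i_{1}\cdots i_{s}}|_{\d u}\bigr) \;=\; H^{i_{1}\cdots i_{s} k}|_{\d u}\, u_{kl},
\]
where $u_{kl}=\nabla_{l}\nabla_{k}u$. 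By tensoriality this identity extends globally, and it is the only ingredient needed beyond the Ricci identity.

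Setting $\phi:=H|_{\d u}$ and $V^{j}:=H^{j}|_{\d u}$, the chain rule gives $\nabla_{j}\phi = H^{k}u_{kj}$ and $\nabla_{j}V^{j}=H^{jk}u_{kj}$. I would then apply the Leibniz rule to both summands on the left of \eqref{Bochner2} to obtain
\begin{align*}
\nabla_{i}\bigl(H^{ij}\,\nabla_{j}\phi\bigr) &= H^{ijm}H^{k}u_{mi}u_{kj} + H^{ij}H^{km}u_{mi}u_{kj} + H^{ij}H^{k}\nabla_{i}\nabla_{j}\nabla_{k}u,\\
H^{i}\,\nabla_{i}(\nabla_{j}V^{j}) &= H^{i}H^{jkm}u_{mi}u_{kj} + H^{i}H^{jk}\nabla_{i}\nabla_{j}\nabla_{k}u.
\end{align*}
On subtraction the two ``cubic'' terms $H^{ijm}H^{k}u_{mi}u_{kj}$ and $H^{i}H^{jkm}u_{mi}u_{kj}$ cancel: after relabeling dummies, the swap of two indices inside the totally symmetric $H^{abc}$, together with $u_{ab}=u_{ba}$, carries one into the other. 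The ``quadratic'' contribution matches the squared-trace on the right of \eqref{Bochner2}: $H^{ij}H^{km}u_{mi}u_{kj} = H^{ij}u_{jk}H^{kl}u_{li}$ after relabeling and using the symmetry of $H^{ij}$ and $u_{ij}$.

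The Ricci term arises from the residual combination of third covariant derivatives. Using $\nabla_{j}\nabla_{k}u=\nabla_{k}\nabla_{j}u$ (since $u$ is a scalar) and relabeling dummy indices, the residue collapses to $H^{ij}H^{k}[\nabla_{i},\nabla_{k}]\nabla_{j}u$; the Ricci identity rewrites the commutator as a Riemann contraction with $\d u$. Substituting the explicit form $H^{ij}=f''u^{i}u^{j}+f'g^{ij}$ and $H^{k}=f'u^{k}$, the ``radial'' piece proportional to $u^{i}u^{j}u^{k}u^{d}R_{djik}$ vanishes because Riemann is antisymmetric in $(d,j)$ while $u^{j}u^{d}$ is symmetric; the ``spherical'' piece involving $f'g^{ij}R_{djik}$ reduces---via the pair-swap symmetry $R_{djik}=R_{ikdj}$ and antisymmetry of the second pair---to the Ricci contraction $R_{kd}$, yielding $(f')^{2}R_{kd}u^{k}u^{d}=R_{kd}H^{k}H^{d}$, which is precisely the term $R_{ij}H^{i}H^{j}$ on the right of \eqref{Bochner2}. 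The hardest part is the bookkeeping: keeping the index gymnastics and sign conventions straight in the Ricci-identity step. Once the chain-rule identity is in hand, the remainder of the calculation is short and mechanical.
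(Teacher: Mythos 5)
Your proposal is correct and follows essentially the same route as the paper's proof: a covariant chain rule for the fiber derivatives of $H$ evaluated at $\d u$ (the explicit $x$-dependence dropping out because $H$ depends on $x$ only through the metric), a Leibniz expansion in which the $H^{ijl}$-terms cancel by total symmetry and the quadratic terms give $H^{ij}u_{jk}H^{kl}u_{li}$, and then the Ricci identity combined with the explicit forms $H^i=f'\nabla^i u$ and $H^{ij}=f''\nabla^i u\,\nabla^j u+f'g^{ij}$ (the paper's \eqref{h' identity}--\eqref{h'' identity}) to convert the residual third-derivative terms into $R_{ij}H^iH^j$. Your normal-coordinates justification of the chain rule and the explicit radial/spherical split in the curvature step simply spell out details the paper leaves implicit.
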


\begin{proof}
Evaluating  the left hand side of \eqref{Bochner2} using the chain rule (and the fact that $v$ derivatives of $H$ all commute with each other
since the cotangent space at each point is flat,  while $x$ derivatives of $H$ vanish due to the form of our Hamiltonian) yields
\begin{align*}
&\nabla_i(H^{ij}H^ku_{jk}) - H^k \nabla_k(H^{ij} u_{ji})
\\=&H^{ijl}H^k(u_{il}u_{jk}-u_{kl}u_{ji}) + H^{ij}u_{jk} H^{kl} u_{il} +H^{ij} H^k (u_{ijk}-u_{kji}) 
\end{align*}
Here $u_{ijk} := \nabla_i \nabla_j \nabla_k u$.
The terms involving third derivatives of $H$ cancel each other (since superscripts on $H$ can be freely permuted and the Levi-Civita connection is torsion free).  It remains to see only that the terms involving
third derivatives of $u$ constitute the Ricci curvature term on the right hand side of \eqref{Bochner2}.  But this follows by combining consequences
\begin{align}
\label{h' identity}
H^i &= (\nabla^i u) f'_{|\d u|^2/2} \qquad {\rm and} \qquad
\\H^{ij} &= (\nabla^i u) (\nabla^j u) 
f''_{|\d u|^2/2} + g^{ij} f'_{|\d u|^2/2}
\label{h'' identity}
\end{align}
of the structure of our Hamiltonian with the defining property (and antisymmetry) of the Riemann tensor
\begin{align*}
 u_{ikj}-u_{kij} = {R_{ikj}}^l u_l. 
\end{align*}
\end{proof}

\begin{remark}[A simpler but longer formula]\label{R:simpler but longer}
Formula \eqref{Bochner2} can also be written in terms of $f$ instead of $H$ using \eqref{h' identity}--\eqref{h'' identity} in which superscripts on the right hand side now correspond to standard tensor indices raised using the Lorentzian metric tensor,
an even more substantial simplification relative to the analogous expressions in \cite{Ohta14h}.
\end{remark}

The next corollary applies our result to the power-law Hamiltonian \eqref{Hamiltonian};  the formula \eqref{Bochner3} it contains also seems simpler to us than the variant developed in \cite[Appendix]{MondinoSuhr23}.

\begin{corollary}[Linearity and smoothness of Busemann functions]\label{C:linear}
From the  strong energy condition and the conclusions of Proposition~\ref{P:maximum principle} it follows 
for $g_{ij} \in C^{\infty}(M)$ that
$b^+ \in C^{\infty}(U)$ 
and its Hessian vanishes $\nabla_i \nabla_j b^+=0$ 
throughout $U$. 
\end{corollary}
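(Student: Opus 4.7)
The plan is to first upgrade $b^+$ from the $C^{1,1}_{\mathrm{loc}}(U)$ regularity granted by Proposition~\ref{P:maximum principle} to $C^\infty(U)$, so that Lemma~\ref{L:Bochner-Ohta} (which requires $C^3$ regularity) becomes applicable; then apply the Bochner-Ohta identity to deduce that $\nabla^2 b^+ \equiv 0$ on $U$.

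For the regularity upgrade, I would exploit the fact that $|db^+|\equiv 1$ on $U$ keeps the $p$-d'Alembertian away from its singular stratum. Rewriting $\square_p b^+ = 0$ in the nondivergence form \eqref{nondivergence} and using $|db^+|\equiv 1$ gives, a.e.\ on $U$,
$$\bigl[(2-p)\,\nabla^i b^+\,\nabla^j b^+ - g^{ij}\bigr]\,\nabla_i\nabla_j b^+ = 0.$$
The coefficient matrix is uniformly positive-definite on $U$ (as verified inside the proof of Proposition~\ref{P:maximum principle}), and its entries lie in $C^{0,1}_{\mathrm{loc}}(U)$ since $b^+\in C^{1,1}_{\mathrm{loc}}(U)$ and $g\in C^\infty$. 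Standard Schauder estimates for linear uniformly elliptic equations with H\"older coefficients (e.g.\ \cite[Theorem~6.2]{GilbargTrudinger98}) then yield $b^+\in C^{2,\alpha}_{\mathrm{loc}}(U)$ for any $\alpha \in (0,1)$; iterating, each such gain in regularity of $b^+$ raises the regularity of the coefficients by one derivative, so the bootstrap continues indefinitely to deliver $b^+\in C^\infty(U)$.

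Next, I would apply Lemma~\ref{L:Bochner-Ohta} to $u = b^+$ with Hamiltonian~\eqref{Hamiltonian} for any fixed $0\ne p<1$. Two observations collapse the left-hand side of \eqref{Bochner2}: since $|db^+|\equiv 1$, the scalar $H|_{db^+}$ is constant and its differential vanishes; and since $\nabla_i H^i|_{db^+}$ equals $\square_p b^+$ up to a sign, it vanishes by $p$-harmonicity. We therefore obtain
$$0 = H^{ij}|_{du}\,u_{jk}\,H^{kl}|_{du}\,u_{li} + R_{ij}\,H^i\,H^j.$$
Both terms on the right-hand side are non-negative: the strong energy condition~\eqref{strong energy} forces $R_{ij}H^iH^j\ge 0$, since by \eqref{h' identity} the vector $H^i|_{db^+}$ is a nonzero multiple of the future timelike $\nabla^i b^+$; and the positive-definiteness of $D^2H|_{db^+}$ on future timelike covectors \cite[Lemma~3.1]{McCann20} permits a symmetric square root $A := \sqrt{D^2 H|_{db^+}}$, rewriting the Hessian term as $\mathrm{Tr}\bigl[(A\,\nabla^2 b^+\,A)^2\bigr]\ge 0$. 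Both summands must therefore vanish individually, and invertibility of $A$ then forces $\nabla^2 b^+ \equiv 0$ throughout $U$ as desired.

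The main obstacle is the regularity bootstrap. Lemma~\ref{L:Bochner-Ohta} as stated requires $C^3$ regularity, whereas Proposition~\ref{P:maximum principle} delivers only $C^{1,1}_{\mathrm{loc}}$; because $\square_p$ is singular at the zero section for $p<1$, one cannot invoke generic regularity theory for quasilinear operators without first using $|db^+|\equiv 1$ to recover the uniformly elliptic regime. Once past $C^{2,\alpha}$, the remainder of the argument is purely algebraic.
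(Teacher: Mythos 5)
Your proposal is correct in its conclusion and its final (algebraic) step mirrors the paper's, but the regularity step is a genuinely different route. The paper does \emph{not} bootstrap to $C^\infty$ before invoking Lemma~\ref{L:Bochner-Ohta}; instead it mollifies, applies the Bochner--Ohta inequality \eqref{Bochner3} to the smooth approximations $u_\varepsilon \in C^\infty(U)$ (with $\|u_\varepsilon - b^+\|_{W^{2,r}(X)} \to 0$ and $\nabla^2 u_\varepsilon \to \nabla^2 b^+$ a.e.), integrates against a nonnegative test function, and passes to the limit $\varepsilon \to 0$ using the boundedness and a.e.\ convergence of $DH_\varepsilon$, $D^2H_\varepsilon$, $dH_\varepsilon$, $\square_p u_\varepsilon$, and $\nabla^2 u_\varepsilon$. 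This yields $\nabla^2 b^+ = 0$ $\vg$-a.e., hence everywhere by continuity of the $C^{0,1}$ Hessian, and smoothness then follows trivially from linearity. In contrast, you insist on first upgrading $b^+$ to $C^\infty$ via Schauder theory and only then applying the Bochner--Ohta identity pointwise. Your argument hinges on the observation that $|db^+|\equiv 1$ removes the degeneracy of $\square_p$ and turns the nondivergence form \eqref{nondivergence} into a linear uniformly elliptic equation with $C^{0,1}_{\mathrm{loc}}$ coefficients; this observation is sound. Two caveats are worth flagging. First, the citation to \cite[Theorem~6.2]{GilbargTrudinger98} is imprecise for the opening move: Theorem~6.2 is an a priori $C^{2,\alpha}$ estimate for functions already known to be $C^{2,\alpha}$, whereas passing from a $W^{2,\infty}_{\mathrm{loc}} \subset W^{2,n}_{\mathrm{loc}}$ strong solution with H\"older coefficients to $C^{2,\alpha}_{\mathrm{loc}}$ requires \cite[Theorem~9.19]{GilbargTrudinger98} (or Lemma~9.16); the subsequent bootstrap is then the standard \cite[Theorem~6.17]{GilbargTrudinger98} applied iteratively. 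Second, one should also verify that the weak (divergence-form) identity $\square_p b^+ = 0$ from Proposition~\ref{P:maximum principle} actually implies the nondivergence-form equation a.e.; this does hold because $|db^+|^{p-2} db^+ \in W^{1,\infty}_{\mathrm{loc}}$ when $|db^+|\equiv 1$, so one can integrate by parts once more and apply the Leibniz rule. It is worth noting that the paper's remark on ``Higher regularity'' explicitly states that the authors were unable to adapt the fully nonlinear Evans--Krylov theory; your route via linearizing with $|db^+|\equiv 1$ sidesteps fully nonlinear theory entirely and, once the correct Schauder references are cited, gives a legitimate alternative. The paper's mollification argument is arguably more robust (it does not depend on delicate elliptic regularity and transfers more readily to rougher metrics, which the authors flag as a forthcoming generalization), while your approach is conceptually more classical and gives a cleaner statement ``$b^+$ is $C^\infty$ first, linear second.''
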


\begin{proof}
Specializing \eqref{Bochner2} to the Hamiltonian \eqref{Hamiltonian} with $0\ne p<1$, under    the strong energy condition \eqref{strong energy},
Lemma~\ref{L:Bochner-Ohta} yields
\begin{align}\nonumber
& g(|du|^{p-2} du , d (\square_p u)) 
-\nabla \cdot ((D^2 H) d (|du|^p/p)) 
\\ \label{Bochner3} 
&=\mathrm{Tr}\left[ \sqrt{D^2H} (\nabla^2 u) D^2 H (\nabla^2 u) \sqrt{D^2H}\right] + |du|^{2p-4} R(du, du),
\\& \ge \mathrm{Tr}\left[ \sqrt{D^2H} (\nabla^2 u) (D^2 H) (\nabla^2 u) \sqrt{D^2H}\right] 
\nonumber
\end{align}
where we have used convexity of the Hamiltonian to take the matrix square-root: strict positive-definiteness of $D^2 H$ on the timelike future cone was shown in Lemma 3.1 of McCann
\cite{McCann20}.
Proposition \ref{P:maximum principle} 
provides a neighbourhood $U$ of the line $\gamma$ such that $b^+ \in C^{1,1}_{loc}(U)$ satisfies $|db^+|=1$ and 
$\square_p b^+=0$ a.e. on $U$.
If $b^+ \in C^3(U)$,  the left-hand side of \eqref{Bochner3} vanishes when $u=b^+$; in this case we conclude $ \sqrt{D^2H} (\nabla^2 b^+) \sqrt{D^2H}=0$ throughout $U$, and
positive-definiteness of $D^2 H$ yields the desired linearity $\nabla_i \nabla_j b^+=0$ of~$b^+$.

If instead $b^+ \in C^{1,1}_{loc}(U)=W^{2,\infty}_{loc}(U)$,  there exists a 
sequence $u_\ep \in C^\infty(U)$ with $|du_\ep| \ge 1-\ep$ and $\|u_\ep-b^+\|_{W^{2,r}(X)} \to 0$ for each $r\in[1,\infty)$ and compact $X \subset U$;  moreover $\nabla^2 u_\ep \to \nabla^2 b^+$ (hence
$\square_p u_\ep \to \square_p b^+$) 
$\vg$-a.e.\ \cite[\S 5.3.1 and C.4]{Evans98}.
For each test function $0 \le \phi \in C^1_0(U)$,  
evaluating \eqref{Bochner3}
at $u_\ep$ and integrating against $\phi$
yields
\begin{align*}
&\int_U [ (\square_p u_\ep)\nabla \cdot (\phi DH_\ep)
- D^2 H_\ep (d\phi,d(H_\ep)) ] d\vg
\\ &\ge 
\int_U \phi \mathrm{Tr}\left[ \sqrt{D^2H_\ep} (\nabla^2 u_\ep) (D^2 H_\ep) (\nabla^2 u_\ep) \sqrt{D^2H_\ep}\right] d\vg
\end{align*}
where $D^2H_\ep:=D^2 H|_{du_\ep}$ and similarly $DH_\ep= DH|_{du_\ep}$ and $H_\ep=H|_{du_\ep}$.
Setting $u_0 := b^+$ and $X =\spt \phi$ yields
$D^2 H_\ep \to D^2 H_0$ and $\nabla \cdot (\phi DH_\epsilon) \to \nabla \cdot (\phi DH_0)$ in $L^{r}(X)$ 
for all $r \in [1,\infty)$.  
Since the sequences
$d H_\ep \to d H_0=0$ and $\square_p u_\ep \to \square_p u_0=0$
and $\nabla^2 u_\ep \to \nabla^2 u_0$ converge
$\vg$-a.e. on $X$ and are bounded, they also converge in $L^r(X)$
for all $r \in [1,\infty)$. Choosing $r=2$,
the $\ep \to 0$ limit yields
\begin{align*}
0 \ge 
\int_U \phi \mathrm{Tr}\left[ \sqrt{D^2H_0} (\nabla^2 u_0) (D^2 H_0) (\nabla^2 u_0) \sqrt{D^2H_0}\right] d\vg.
\end{align*}
Since $b^+ \in C^{1,1}_{loc}(U)$,  the covariant Hessian 
$\nabla^2 b^+$ is absolutely continuous with respect to $\vg$;
positive-definiteness of $D^2H_0$ implies $\nabla^2 b^+$ vanishes $\vg$-a.e. --- hence everywhere --- on $X$.  
Arbitrariness of 
$0 \le \phi \in C^1_0(U)$ concludes the proof that $b^+$
is smooth and linear throughout $U$.
\end{proof}

\section{Local splitting in Newman's setting}

Having established linearity of the Busemann function $b^+$ in a neighbourhood of the line $\gamma$, we can prove a local version of the splitting (Theorem \ref{T:Lorentzian splitting}).
Although our strategy is inspired by 
that of Eschenburg~\cite{Eschenburg88},
Galloway \cite{Galloway89}, and Galloway and Horta \cite{GallowayHorta96},  it is {much simpler than} 
these (as well as the textbook 
proof~\cite{BeemEhrlichEasley96}),  due to the fact we already know that the Busemann functions $b^+=b^-$ 
are linear in an entire neighbourhood $U$ of $\gamma$,  and not merely on the intersection of this neighbourhood with some well-chosen spacelike surface such as the zero level set $S_0$, where $S_r = \{x \in M \mid b^+(x)=r\}$ for each $r \in \R$. 
The vanishing Hessian of $b^+$ already shows $S_0$ to be totally geodesic in $U$,  so we need no recourse to Bartnik's existence result for surfaces of zero mean curvature \cite{Bartnik88b} either.
However,  we shall still need to show the vanishing of this Hessian propagates to all asymptotes of $\gamma$ that pass through $S_0$ in this neighbourhood.

To define asymptotes, we recall the terminology of Galloway and Horta~\cite{GallowayHorta96}:
Let $\LL(\sigma)$ denote the {Lorentzian} length (i.e.\ proper-time) along a future-directed (hence causal) curve $\sigma:[s,t]\longrightarrow M$ 
from \eqref{action} above.
Given $t \in (0,\infty]$, a set $S\subset M$,  a ray $\sigma:[0,t) \longrightarrow M$ is called an {\em $S$-ray} if $\gamma$ maximizes distance to $S$, i.e. if 
$$
\LL(\sigma|_{[0,s]})=\ell(S,\sigma(s)) \qquad \forall 0 \le s <t,
$$
where $\ell(S,x) := \sup\{\ell(y,x) \mid y \in S\}$. Thus a ray $\sigma$ is also a $\{\sigma(0)\}$-ray.
A sequence of future-directed curves $\sigma_k:[s_k,t_k]\longrightarrow M$ is called {\em limit maximizing} if 
$$
0 \le \liminf_{k \to \infty} [\LL(\sigma_k) - \ell(\sigma_k(s_k),\sigma_k(t_k))].
$$
Given a complete $S$-ray $\gamma$,  
a {\em generalized co-ray}
refers to a ray constructed as a 
limit curve $\sigma:[0,t)\longrightarrow M$ 
--- in the sense of \cite{GallowayHorta96} ---  
of a limit maximizing sequence
$\sigma_k:[0,s_k] \longrightarrow M$ with $\lim_{k \to\infty}  \sigma_k(0)=\sigma(0) \in I^-(\gamma) \cap I^+(S)$ and $\sigma_k(s_k)=\gamma(r_k)$ and $r_k \to \infty$.
If  $\LL(\sigma_k) =  \ell(\sigma_k(s_k),\sigma_k(t_k))$, meaning the curves $\sigma_k$ are all maximizing, then $\sigma$ is called a {\em co-ray}.
If $\sigma_k(0)=\sigma(0)$ for each $k$, the co-ray is called an {\em asymptote}.

We are now ready to give a simple proof of the following local splitting theorem, proved for $g_{ij} \in C^\infty(M)$
by Eschenburg \cite[Proposition~6.3]{Eschenburg88}  under hypothesis (a) plus (b), Galloway \cite{Galloway89} under {global hyperbolicity} (a) and by Newman \cite{Newman90}
(also Galloway and Horta \cite[\S 5]{GallowayHorta96}) 
under {timelike geodesic completeness} (b).
The initial argument covers both cases
but then bifurcates:  we complete the proof of Newman's case (b) in the present section and defer the completion of Galloway's case (a) to the following section.

\begin{theorem}[Local splitting]\label{T:local splitting}
Under the hypotheses of Theorem \ref{T:Lorentzian splitting}(a) or (b),
there is a neighbourhood $W \subset M$ of $\gamma$ which splits:  
There is a 
smooth spacelike hypersurface $S \subset S_0 \subset M$ containing $\gamma(0)$ and a diffeomorphism $E:\R \times S \longrightarrow W$ given by 
$E(r,x)= \exp_x r db^+$ which is a local isometry in the sense that $E$ pulls the metric $g$ back to the product metric $dr^2 - h$,  where $-h$ is the restriction of $g$ to $S$.
Moreover, $r \in \R \mapsto E(r,x)$ is a line maximizing time to $S_0:=\{b^+=0\}$ for each $x \in S$, and $\gamma(\cdot) = E(\cdot,\gamma(0))$. 
\end{theorem}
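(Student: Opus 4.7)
The plan is to read off the splitting directly from the fact that $N := g^{-1}(db^+,\cdot)$ is a \emph{parallel} unit future-directed timelike vector field on $U$. Indeed, $\nabla^2 b^+ \equiv 0$ forces $\nabla N = 0$ while Corollary \ref{C:linear} gives $g(N,N) = |db^+|^2 = 1$ on $U$; in particular, the integral curves of $N$ are proper-time parameterized future-directed timelike geodesics, and $\gamma(r) = \phi_r(\gamma(0))$ for the flow $\phi$ of $N$. The level set $S_0 := \{b^+ = 0\}$ is smooth and spacelike near $\gamma(0)$ since $db^+$ is nonvanishing and timelike; I choose $S$ to be a small open spacelike disc in $S_0 \cap U$ containing $\gamma(0)$ and set
\[
E(r,x) := \phi_r(x) = \exp_x(r N(x)),
\]
which in Newman's case (b) is defined on all of $\R \times S$ by timelike geodesic completeness of $M$.

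Next I would verify that $E$ is a local isometry of $(\R \times S,\,dr^2 - h)$ into $(M,g)$, where $h := -g|_{TS}$ is the induced Riemannian metric on $S$. The key observation is that for any $v \in T_xS$, the pushforward $(\phi_r)_* v$ coincides with the parallel transport $P_r v$ along the integral curve $r \mapsto E(r,x)$: the field $J(r) := (\phi_r)_* v$ satisfies $[N,J] = 0$, and torsion-freeness combined with $\nabla N \equiv 0$ yields $\nabla_N J = \nabla_J N + [N,J] = 0$. Since parallel transport is a $g$-isometry and $N(x) \perp_g T_xS$ by construction with $g(N,N) = 1$, the pullback splits as
\[
E^*g\big|_{(r,x)} = dr^2 + g|_{T_xS} = dr^2 - h,
\]
and in particular $E$ is a local diffeomorphism throughout its domain.

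To globalize, I would use the identity $b^+ \circ E(r,x) = r$, which holds because $\partial_r(b^+\circ E) = db^+(N) = 1$ and $b^+|_S = 0$. This gives injectivity of $E$ at once: the value $r$ is determined by the image point, after which $x = \phi_{-r}(E(r,x))$ is uniquely recovered. Hence $W := E(\R \times S)$ is open (by the local isometry) and $E:\R \times S \to W$ is the advertised diffeomorphism, with $\gamma \subset W$ because $\gamma(r) = E(r,\gamma(0))$. Finally, each curve $r \mapsto E(r,x)$ is a future-directed unit-speed timelike geodesic with $b^+$-displacement $r$; since $b^+$ inherits the 1-steep inequality $b^+(z) - b^+(y) \geq \ell(y,z)$ for $y \leq z$ by passing to the limit in \eqref{1-steep}, this forces $\ell(y, E(r,x)) \leq r$ for every $y \in S_0$, which, combined with the value $r$ attained along the geodesic itself, shows the curve is an $S_0$-maximizer and therefore a line.

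The chief obstacle I foresee is the global existence of $E$ on $\R \times S$ in Galloway's case (a), where timelike geodesic completeness is not assumed and so integral curves of $N$ starting at $x \in S \setminus \{\gamma(0)\}$ might in principle fail to be defined for all $r \in \R$, or leave the region $U$ where we have extracted the parallel structure of $N$. The authors explicitly defer case (a) to the following section, which presumably constructs asymptotes of $\gamma$ through nearby base points in $S$ and propagates the linearity of $b^+$ along each, thereby extending $\nabla N = 0$ to the entire tube swept out by $E$. Under hypothesis (b), by contrast, the argument above is essentially complete once timelike geodesic completeness is combined with $\nabla N = 0$.
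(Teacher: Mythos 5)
Your identification of $N = db^+$ as a parallel unit timelike vector field on $U$ and your Jacobi-field computation $(\phi_r)_* v = P_r v$ (from $[N,J]=0$ and $\nabla N \equiv 0$) are both correct and in the spirit of the paper's argument. However, there is a genuine gap that you have noticed but misattributed: you write that the obstacle of integral curves ``leav[ing] the region $U$ where we have extracted the parallel structure of $N$'' arises in case~(a) only, and that ``under hypothesis (b), by contrast, the argument above is essentially complete once timelike geodesic completeness is combined with $\nabla N = 0$.'' This is not so. Timelike geodesic completeness guarantees that $\exp_x(rN(x))$ is defined for all $r$, but it does \emph{not} keep the geodesic inside $U$, and Corollary~\ref{C:linear} only gives $\nabla^2 b^+ = 0$ on the initial tube $U$ around $\gamma$. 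Once $E(r,x)$ leaves $U$ your relation $\nabla_N J = \nabla_J N = 0$ fails, the push-forward need not agree with parallel transport, and the claimed isometry formula $E^* g = dr^2 - h$ is unjustified; similarly, the identity $b^+\circ E(r,x) = r$ requires $db^+(N)=1$ at points $E(r,x)$ outside $U$, where the regularity of $b^+$ is a priori unknown.

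The paper's proof of case~(b) fills exactly this gap, and it is precisely the asymptote-propagation step you surmised belongs to case~(a). For each $x\in S$, the paper first constructs the complete asymptote $\sigma$ through $x$ and shows $b^+(\sigma(r))=b^-(\sigma(r))=r$ via \cite[Prop.~2.6]{GallowayHorta96} and the ordering $b^+\ge b^-$; then, because $\sigma$ is itself a complete timelike line, it re-applies the entire machinery (Proposition~\ref{P:semiconcave}, Corollary~\ref{L:harmonicity}, Proposition~\ref{P:maximum principle}, Corollary~\ref{C:linear}) to $\sigma$ to obtain a neighbourhood $V$ of $\sigma$ in which the Busemann function $b^\sigma$ is smooth with vanishing Hessian. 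It then argues that $S^\gamma = S^\sigma$ on $V\cap W$ (both totally geodesic and orthogonal to $\sigma$), that the asymptotes $E(\cdot,x)$ and $\beta^x(\cdot)$ coincide, and hence $b^\gamma = b^\sigma$ on $V\cap W$, which propagates $\nabla^2 b^\gamma = 0$ from $U$ to all of $W$. Only after this step is $N$ parallel on the whole of $W$, and only then does your Jacobi-field / Killing-field argument give the local isometry. The deferral to the next section in case~(a) concerns the \emph{additional} difficulty that the asymptote $\sigma$ may a priori fail to be complete or even timelike, which requires Lemma~\ref{L:timelike}, Proposition~\ref{P:asymptotes}, and a separate openness/closedness argument (Galloway's estimate~\eqref{Galloway's *}).

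So the overall strategy you outline is right and your mechanism for the metric split is valid; but you have collapsed the key propagation lemma into case~(a) when it is in fact the heart of case~(b) as well, leaving your case~(b) argument incomplete.
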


\begin{proof}[Proof in {(b) timelike geodesically complete} case]
Taking the neighbourhood $U$ of $\gamma$ from Corollary \ref{C:linear} --- on which the Hessian of $b^+ \in C^{\infty}(U)$ vanishes --- 
smaller if necessary,
ensures each $x \in S:=S_0 \cap U$ forms the 
base $x=\sigma^\pm(0)$ of forward and backward asymptotes $\sigma^+:[0,s_+)\longrightarrow M$
and $\sigma^-:(s_-,0] \longrightarrow M$ to $\gamma$ (with $s_+=\infty=-s_-$ in case (b)),
both proper-time reparameterized to be future-directed;
 this follows by using Lemmas 2.1--2.4 of Galloway and Horta~\cite{GallowayHorta96} to extract timelike subsequential limits $r\to \infty$ of the maximizing segments from $x$ to $\gamma(\pm r)$ provided for $r$ sufficiently large by our Theorem \ref{T:Lipschitz}(ii)-(iii).
Together they form a future-directed geodesic $\sigma$, potentially broken at $\sigma(0)\in S$.  Proposition~2.6 of the same reference shows 
$b^+(\sigma(r))=r$ for all $r \in [0,s_+)$;
then
\begin{align*}
b^-(\sigma(r)) 
&= b^-(\sigma(r)) - b^-(\sigma(0))
\\& \ge r = b^+(\sigma(r))
\end{align*}
couples with \eqref{ordering} $b^+ \ge b^-$ 
to yield $b^-(\sigma(r))=r$ in the same range 
of $r \in [0,s_+)$.
Combining the preceding argument with time-reversal symmetry shows both Busemann functions
increase along the full asymptote at unit rate: $b^\pm(\sigma(r))=r$ for all $r \in (s_-,s_+)$. 
Because
$$
b^+(y)-b^+(x) \ge \ell(x,y)
$$
for all $x \ll y$,  with equality at $(x,y)=(\sigma(r),\sigma(s))$ for all $r<s\in (s_-,s_+)$,  it follows that (i) that $\sigma$ is a timelike maximizer;
(ii) its tangent $\sigma'(r)=N(\sigma(r))$ agrees with the 
direction $N=db^+$ of slowest increase of $b^+$ on $U$;
and (iii) $\sigma^\pm$ are $S_0$-rays.
 Thus $\sigma$ is not broken at $\sigma(0)$ after all.  Moreover,  $\sigma$
is normal to the zero level set  $S \subset S_0$ of $b^+$, which is spacelike since $N$ is unit timelike, and smooth by the implicit function theorem.

The arguments above show that $E(r,x):=\exp_x rN$, with maximal domain of definition $\mathcal{D} \subset \R \times S$, has the following properties: The trajectories $E(\cdot,x)$ and $E(\cdot,y)$, for $x,y \in S$, are timelike lines which do not cross (unless $x=y$), contain no conjugate points nor focal points to $S$, and maximize the time separation to $S$.  In {the globally hyperbolic} case (a) $\mathcal{D}$ is open
by \cite[Proposition 9.7]{BeemEhrlichEasley96}, while in {the timelike geodesically complete} case
(b), $\mathcal{D} = \R \times S$ holds automatically.
In both cases the implicit function theorem shows $E$ is a diffeomorphism from $ \mathcal{D} \subset \R \times S$ onto a set $W \subset M$ which is open, 
and $E(r,\gamma(0))=\gamma(r)$ for all $r \in \R$.

We have now shown $b^+=b^-$ throughout $W$ (in addition to $U$).
Since $E$ is a smooth diffeomorphism and $b^+(E(r,x))=r$ for each $(r,x)\in \mathcal{D}$,  the continuous differentiability of $b^\gamma:=b^\pm$ 
propagates throughout $W$. On $U$,  recall $b^\gamma$
is smooth and its Hessian vanishes, from Corollary~\ref{C:linear}.
Thus $N=db^\gamma$ satisfies Killing's equation in $U$,  hence is parallel throughout $U$, so that any geodesic in $U \subset M$ has constant inner product with $N$.
In particular, since $N=db^\gamma$ is normal
to the zero level $S^\gamma := S_0\cap U =S_0\cap W$ of $b^\gamma$,
we see $S$ is totally geodesic: its second fundamental form (or shape operator) vanishes. 

{\bf At this point we specialize to {(b) the timelike geodesically complete} case,}
defering the completion of {(a) the globally hyperbolic} case to the following section.
In case (b) the line $\sigma$ is complete,
so applying the same logic to it as to $\gamma$
yields a neighbourhood $V$ of $\sigma$ on which the forward Busemann function
$b^\sigma$ is smooth and has vanishing Hessian.  Thus its zero set $S^\sigma := \{x\in V \mid b^\sigma(x)=0 \}$ is also totally geodesic in $V$.
Taking $V$ smaller if necessary again ensures each $x \in S^\sigma$ lies on a 
complete line $\beta^x:(-\infty,\infty) \longrightarrow M$ 
with $\beta^x(0)=x$
which is (future and past) asymptotic to $\sigma$.
Since both $S^\gamma$ and $S^\sigma$ are orthogonal to $\sigma$ and are totally geodesic,  we conclude they coincide in the 
neighbourhood $V \cap W$ of $\sigma(0)$.  Now the asymptotes to $\sigma$
and to $\gamma$ both intersect $S$ orthogonally in $W \cap V$, so for each
$x \in S\cap V$ these asymptotes ($E(\cdot,x)$ to $\gamma$ and $\beta^x(\cdot)$ to $\sigma$) also coincide.  Moreover both $b^\gamma$ and $b^\sigma$ increase at rate one along these asymptotes,  hence $b^\gamma=b^\sigma$ throughout $W\cap V$.  Thus $b^\gamma$ inherits the linearity of $b^\sigma$: its Hessian
vanishes throughout $W \cap V$.  Since the asymptote $\sigma$ of $\gamma$ had arbitrary base $\sigma(0) \in S$,  we conclude the Hessian of $b^\gamma$ vanishes globally on $W$.

We have now shown the flow map $\Phi(r,x)$ 
of $N=db^\gamma$ satisfying $\frac{d\Phi}{dr} = N(\Phi(r,x))$ and $\Phi(0,x)=x$ 
coincides with $E$ on $\R \times S$.
The fact that the Hessian of $b^\gamma$ vanishes  means $N=db^\gamma$ is parallel throughout $W$ and satisfies Killing's equation.
Thus $\Phi(r,\cdot):W \longrightarrow W$ pulls-back the metric $g$ 
to itself for each $r \in \R$, which shows $E(r,S)$ to be isometric to $S=E(0,S)$.
Along the totally geodesic surface $S$ normal to $N$,  the metric $g$ therefore splits into the direct sum of its restriction $-h$ to $S$ plus $dr^2$ in the orthogonal direction $N$.  Thus $E$
gives the desired local isometry between 
$(\mathcal{D}=\R\times S, dr^2-h)$ and $(W,g)$.
\end{proof}

\begin{corollary}[Local to global isometry]\label{C:local splitting}
With the hypotheses and notation of Theorem~\ref{T:local splitting}, the time-separation function $\ell$ on $M^2$ satisfies
\begin{equation}\label{local to global}
\ell(E(s,x),E(t,y)) \ge 
\begin{cases}
\sqrt{(t-s)^2 - d^2_h(x,y)} & {\rm if}\ t-s \ge d_h(x,y),
\\ -\infty & {\rm else},
\end{cases}
\end{equation}
where $s,t \in\R$ and $d_h$ denotes the Riemannian distance in $S$ between $x,y \in S$.  Moreover, if $W=M$ then this estimate becomes an equality so the isometry becomes global.
\end{corollary}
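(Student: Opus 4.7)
The plan is to exploit the local isometry $E:(\R\times S, dr^2-h)\longrightarrow W$ provided by Theorem~\ref{T:local splitting} to reduce the corollary to elementary calculations in the product spacetime. The lower bound~\eqref{local to global} follows by explicitly constructing nearly-maximizing timelike curves as lifts through $E$ of near-geodesics in $(S,h)$; the equality statement under $W=M$ then follows by applying Cauchy-Schwarz to arbitrary timelike curves in the product.

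First I would treat the strict case $t-s > d_h(x,y)$. Fix $\epsilon\in(0, t-s-d_h(x,y))$ and choose a piecewise smooth path $\alpha:[0,1]\longrightarrow S$ from $x$ to $y$ at constant $h$-speed $L:=\int_0^1|\dot\alpha|_h\,d\tau \le d_h(x,y)+\epsilon$. The curve
\[
\beta(\tau):=E(s+(t-s)\tau,\alpha(\tau))
\]
is well-defined in $W$, and the local-isometry property yields $g(\dot\beta,\dot\beta) = (t-s)^2 - L^2 > 0$; since its $r$-component strictly increases, $\beta$ is future-directed timelike with $\mathcal{L}(\beta) = \sqrt{(t-s)^2 - L^2}$. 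Sending $\epsilon\to 0$ gives $\ell(E(s,x),E(t,y)) \ge \sqrt{(t-s)^2 - d_h(x,y)^2}$. The borderline case $t-s = d_h(x,y)$ (where the right-hand side equals $0$) reduces to proving $E(s,x)\le E(t,y)$, which follows from the strict case applied to the pair $(E(s,x), E(t+1/n,y))$ and letting $n\to\infty$, using closedness of $J^+$ (immediate under hypothesis (a), and available in the splitting region $W$ in case (b)).

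For the equality assertion under $W=M$, the diffeomorphism $E$ becomes a global isometry, and it suffices to compute the time-separation in the product spacetime $(\R\times S, dr^2-h)$. Given any future-directed timelike curve $(\tau,\mu):[0,1]\longrightarrow \R\times S$ from $(s,x)$ to $(t,y)$, reparameterize by $\tau\in[s,t]$ so that its Lorentzian length equals $\int_s^t \sqrt{1-|\mu'(\tau)|_h^2}\,d\tau$. Setting $F:=\sqrt{1-|\mu'|_h^2}$ and $G:=|\mu'|_h$, the identity $F^2+G^2=1$ combined with Cauchy-Schwarz yields
\[
\Bigl(\int_s^t F\,d\tau\Bigr)^2 + \Bigl(\int_s^t G\,d\tau\Bigr)^2 \;\le\; (t-s)^2,
\]
so using $\int_s^t G\,d\tau \ge d_h(x,y)$ one obtains the desired upper bound $\mathcal{L}\le \sqrt{(t-s)^2-d_h(x,y)^2}$. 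When $t-s<d_h(x,y)$, integrating the timelike condition $\dot\tau\ge|\dot\mu|_h$ contradicts $(s,x)\le(t,y)$, so the time-separation equals $-\infty$.

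The main obstacle I expect is the borderline case $t-s=d_h(x,y)$, where no timelike lift of $\alpha$ exists and the construction of Paragraph~2 degenerates into a null curve (or worse, a spacelike one if a minimizing path is unavailable in the possibly non-complete $(S,h)$). This forces the limiting causal-closure argument above. A second subtle point is to ensure that the approximating paths $\alpha$ stay within $S$ rather than the ambient hypersurface, so that the lift $\beta$ remains in the domain $W$ of $E$; this is automatic from the convention that $d_h$ is the intrinsic Riemannian distance in $S$.
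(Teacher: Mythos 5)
Your proposal is correct in substance and follows essentially the same route the paper's one-sentence proof intends: the lower bound by lifting near-minimizing paths of $(S,h)$ through the isometry $E$ to future-directed timelike curves of the product $dr^2-h$, and the equality for $W=M$ by the elementary Cauchy--Schwarz (triangle-inequality-for-integrals) computation of the time separation of $(\R\times S, dr^2-h)$; that computation, including the reparameterization by the $r$-coordinate and the reduction $\int|\mu'|_h\,d\tau\ge d_h(x,y)$, is fine.

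The one step I would not accept as written is your treatment of the borderline case $t-s=d_h(x,y)$ under hypothesis (b). You appeal to closedness of $J^+$ being ``available in the splitting region $W$'', but no justification is offered, and none is readily available at this stage: completeness of $(S,h)$ is only established at the very end of the global argument, and over an incomplete $(S,h)$ the causal relation of the product $dr^2-h$ need not be closed (take $S=\R^2\setminus\{0\}$ with the flat metric: $(0,(-1,0))\ll(2+1/n,(1,0))$ for every $n$, yet $(0,(-1,0))\not\le(2,(1,0))$ since no connecting path of length $\le 2$ exists). Nor can one substitute lower semicontinuity of $\ell$, which goes in the wrong direction. Under hypothesis (a) your closure argument is correct, since global hyperbolicity makes $J^+$ closed. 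To be fair, the paper itself does not address this corner case at all---its proof is a direct reduction to Theorem~\ref{T:local splitting} and the definition of $\ell$---and its later uses of the corollary involve only the strict regime $t-s>d_h(x,y)$; but since you explicitly single out the borderline case as the crux, the case-(b) justification you give is a genuine gap rather than a routine omission.
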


\begin{proof}
This follows from Theorem \ref{T:local splitting}, the definition of the product metric $dr^2-h$, and of $\ell$ from \eqref{global time separation}.
\end{proof}

\section{Local splitting in Galloway's setting}

To adapt the local splitting from the 
(b) timelike geodesically complete setting of Newman \cite{Newman90}
to the (a) globally hyperbolic setting of Galloway \cite{Galloway89} requires additional arguments
to rule out any possible incompleteness of the inextendible
asymptotes constructed in its proof.  Readers interested only
in timelike geodesically complete spacetimes (b) can skip this section.  We begin with a criterion for asymptotes to be timelike, which can be viewed as a partial converse to Theorem \ref{T:Lipschitz}.

\begin{lemma}[Differentiability criterion for asymptotes to be timelike]
\label{L:timelike}
Let $(M,g)$ be a strongly causal
spacetime.
Let $b^+=\lim_{r \to \infty} b^+_r$ denote the Busemann function associated by \eqref{b^+_r} to a future-complete $S$-ray $\gamma:[0,\infty)\longrightarrow M$.
Let $\alpha:[0,a_+)\longrightarrow M$ be an asymptote to $\gamma$.
If $b^+$ is differentiable at $\alpha(0)$ and remains Lipschitz nearby,
then $\alpha$ is timelike.
\end{lemma}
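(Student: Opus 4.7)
The plan is to first establish the identity
\[
b^+(\alpha(t)) - b^+(\alpha(0)) = \mathcal{L}(\alpha|_{[0,t]}) \quad \text{for all } t \in [0,a_+),
\]
and then derive a contradiction from the assumption that $\alpha$ is null. The inequality $\geq$ is immediate from the 1-steep property \eqref{1-steep} passed to the limit $r \to \infty$, together with the fact that $\alpha$ is itself a ray so $\ell(\alpha(0),\alpha(t)) = \mathcal{L}(\alpha|_{[0,t]})$. For the reverse inequality, I would use the limit-maximizing sequence $\sigma_k:[0,s_k] \to M$ defining $\alpha$ as an asymptote, with $\sigma_k(0) = \alpha(0)$, $\sigma_k(s_k)=\gamma(r_k)$ and $r_k \to \infty$. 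Since sub-segments of maximizers are themselves maximizers, a direct computation from \eqref{b^+_r} gives
\[
b_{r_k}^+(\sigma_k(t)) - b_{r_k}^+(\alpha(0)) = \mathcal{L}(\sigma_k|_{[0,t]}).
\]
Monotone convergence $b^+_r \searrow b^+$ from \eqref{ordering} yields $b_{r_k}^+(\alpha(0)) \to b^+(\alpha(0))$, and combined with continuity of $b^+$ near $\alpha(0)$ (supplied by the Lipschitz hypothesis) gives $b_{r_k}^+(\sigma_k(t)) \geq b^+(\sigma_k(t)) \to b^+(\alpha(t))$, whence $\liminf_k \mathcal{L}(\sigma_k|_{[0,t]}) \geq b^+(\alpha(t)) - b^+(\alpha(0))$. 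The standard upper semi-continuity of Lorentzian arclength for uniformly convergent causal curves in a strongly causal spacetime then bounds $\limsup_k \mathcal{L}(\sigma_k|_{[0,t]}) \leq \mathcal{L}(\alpha|_{[0,t]})$, closing the identity.

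Now suppose for contradiction $\alpha'(0)$ is null. Then $\mathcal{L}(\alpha|_{[0,t]}) \equiv 0$, so $b^+ \circ \alpha$ is constant, and since $b^+$ is differentiable at $\alpha(0)$ the chain rule forces $db^+(\alpha(0))[\alpha'(0)] = 0$. On the other hand, the 1-steep property at a point of differentiability implies $db^+(\alpha(0))[v] \geq \sqrt{g(v,v)}$ for every future-directed timelike $v \in T_{\alpha(0)}M$; a routine linear-algebra check in a Lorentz frame at $\alpha(0)$ (or appeal to Remark~\ref{R:conic intersections}) shows the metric-dual vector $V := (db^+(\alpha(0)))^{\sharp}$ to be future-directed timelike with $g(V,V) \geq 1$. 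But a future-directed timelike vector always pairs strictly positively with any future-directed null vector, so $db^+(\alpha(0))[\alpha'(0)] = g(V,\alpha'(0)) > 0$, contradicting the previous display. Hence $\alpha$ must be timelike.

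The main technical obstacle is the $\leq$ direction of the first identity: one must choose parameterizations of the $\sigma_k$ so that both pointwise convergence $\sigma_k(t) \to \alpha(t)$ at each fixed $t$ and upper semi-continuity of $\mathcal{L}$ are simultaneously available. Since proper-time parameterization degenerates when $\alpha$ is null, I would work throughout with the auxiliary $\tilde g$-arclength parameterization provided by the limit-curve construction of \cite{GallowayHorta96}, and then translate back to Lorentzian length at the end; in a strongly causal spacetime this is classical and poses no difficulty. The remaining steps reduce to elementary linear algebra on $T_{\alpha(0)}M$ combined with the differentiability hypothesis.
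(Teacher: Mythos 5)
Your proof is correct, and it takes a related but genuinely distinct route from the paper's. Both arguments reduce to the same final contradiction: if $\alpha$ were null, then $db^+|_{\alpha(0)}(\alpha'(0))=0$, which is impossible because the $1$-steep property at a point of differentiability forces the metric dual of $db^+|_{\alpha(0)}$ to be future-directed timelike with $g$-norm at least $1$, hence to pair strictly positively with any future-directed null vector. Where you differ is in deriving $db^+(\alpha'(0))=0$. The paper applies Lemma~\ref{L:upper support} with $y_r=\tilde\sigma_r(\epsilon)$ to obtain the two-sided inequality $b^+(\tilde\alpha(s))\le b^+(\tilde\alpha(0))$ for $|s|<\epsilon$; this requires first extending $\tilde\alpha$ slightly past $\tilde\alpha(0)$ and invoking strong causality together with \cite{Min:19} to ensure $\tilde\alpha(-\epsilon)\in I^-(S)$ and local maximality. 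You instead prove the exact increment identity $b^+(\alpha(t))-b^+(\alpha(0))=\LL(\alpha|_{[0,t]})$ (a causal-geodesic version of \cite[Prop.~2.6]{GallowayHorta96}), exploiting the equality $b^+_{r_k}(\sigma_k(t))-b^+_{r_k}(\alpha(0))=\LL(\sigma_k|_{[0,t]})$ along the defining maximizers, monotonicity \eqref{ordering}, the local Lipschitz hypothesis, and upper semicontinuity of the Lorentzian length functional under $C^0$ convergence of causal curves. This buys you two things: you avoid the two-sided extension of $\alpha$ entirely (one-sided constancy plus differentiability at $\alpha(0)$ already yields $db^+(\alpha'(0))=0$), and your appeal to $\LL$-upper semicontinuity is explicit, whereas the paper uses it tacitly when passing $r\to\infty$ through the $\ell(x,y_r)$ term. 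Your final-paragraph observation about using the auxiliary $\tilde g$-arclength parameterization throughout (since proper time degenerates in the null case) is exactly the right fix and matches what the paper does implicitly via its $\tilde\sigma_r$, $\tilde\alpha$ notation.

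One small imprecision: you describe the $\sigma_k$ as a \emph{limit-maximizing} sequence, but for an asymptote (as opposed to a generalized co-ray) the $\sigma_k$ are genuinely maximizing with a fixed base point $\sigma_k(0)=\alpha(0)$; your subsequent step --- sub-segments of maximizers are maximizers --- uses this, so the wording should be adjusted, but the argument is unaffected.
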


\begin{proof}
The definition of asymptote asserts $\alpha$
is a (subsequential) limit curve of a sequence of maximizing geodesics $\sigma_r:[0,s_r] \longrightarrow M$
with $\sigma_r(0)=\alpha(0) \in I^+(S) \cap I^-(\gamma)$ and $\sigma_r(s_r) = \gamma(r)$ as $r \to \infty$. In other words, as $r\to\infty$ along a 
subsequence, $\tilde \sigma_r$ converges uniformly to $\tilde \alpha$ on compact subsets of $[0,\infty)$,
where $\tilde \sigma_r$ and $\tilde \alpha$ denote the $\tilde g$-arclength reparameterizations 
of $\sigma$ and $\alpha$ respectively.
Such an asymptote is a ray by \cite[Lemma 2.4]{GallowayHorta96}, hence $\tilde \alpha$ can be extended to $(-\infty,\infty)$ and affinely reparameterized as a future- and past-inextendible geodesic $\alpha:(a_-,a_+)\longrightarrow M$ for some $a_- \in [-\infty,0)$,  
and $a_+\in (0,\infty]$  
with $\alpha(0)=\tilde \alpha(0)$.

Strong causality of $(M,g)$ and \cite[Thms.\ 2.9, 2.35]{Min:19} imply for $\epsilon>0$ sufficiently small, that $\tilde \alpha(-\epsilon) \in I^-(S)$ and
$\tilde \alpha|_{[-\epsilon,\epsilon]}$ is maximizing.
As in the last paragraph of the proof of 
Proposition~\ref{P:semiconcave} (and of Proposition~\ref{P:asymptotes}), setting 
$x =\alpha(0)$ and $y_r=\tilde \sigma_r(\epsilon)$ yields
$$b^+_r(\tilde \alpha(s)) 
\le b^+_r(x) + \ell(x,y_r) - \ell(\tilde \alpha(s),y_r)
$$
for all $|s|<\epsilon$.  To derive a contradiction, assume $\alpha$ is null.  Letting $r \to \infty$
along the relevant subsequence yields
\begin{align*}
b^+(\tilde \alpha(s)) 
&\le b^+(x) + \ell(x,\tilde \alpha(\epsilon)) - \ell(\tilde \alpha(s),\tilde \alpha(\epsilon))
\\ &= b^+(\tilde \alpha(0))
\end{align*}
by the nullity of $\tilde \alpha$ and 
our choices of $\epsilon$ and $|s|<\epsilon$.
Differentiation at $s=0$ then shows $db^+|_x(\tilde \alpha'(0))=0$, so $db^+|_x$ is vanishing, spacelike or null.  On the other hand, the Lipschitz continuity hypothesized of $b^+$ around $x=\alpha(0)$
yields $db^+|_x$ timelike (and future-directed, with Lorentzian magnitude at least $1$) as in Remark \ref{R:conic intersections}.  This contradiction forces $\alpha$ to be timelike as claimed.
\end{proof}

Our next result is a variant on the semiconcavity Proposition \ref{P:semiconcave}. As before, the equi-semiconcavity of the approximate Busemann functions $(b^+_r)_{r \ge R}$ is essential; semiconcavity of the limiting function $b^+$ alone does not yield the desired corollary.

\begin{proposition}[Equi-semiconcave Busemann limits on asymptotes]\label{P:asymptotes}
Let $(M,g)$ be an (a) globally hyperbolic 
spacetime.
Let $b^+=\lim_{r \to \infty} b^+_r$ denote the Busemann function associated by \eqref{b^+_r} to a future-directed ray $\gamma:[0,\infty)\longrightarrow M$.
Let $\alpha:[0,a_+)\longrightarrow M$ be a 
(future inextendible)
timelike asymptote to $\gamma$. 
Let $b^+ \in C^1(W)$ 
on a neighbourhood of $W$ of 
$x_0=\alpha(a)$ for some $a \in [0, a_+)$. 
Then for large enough 
$C=C(M,g,\tilde g, x_0,db^+(x_0))$ and 
$R$ depending on the same parameters as well as on $\gamma$,
 the functions $(b^+_r)_{r \ge R}$ have semiconcavity constant $C$ on some smaller neighbourhood $X \subset I^-(\gamma(R))$ of $x_0$.
\end{proposition}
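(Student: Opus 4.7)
My plan is to adapt Proposition~\ref{P:semiconcave}'s proof, replacing the Lipschitz and tangent-boundedness conclusions of Theorem~\ref{T:Lipschitz}(ii)--(iii) --- which held near the line --- with analogues near $x_0$ derived from the $C^1$ hypothesis on $b^+$. Since $\alpha$ is an asymptote to $\gamma$, a limit curve argument applied to its defining maximizers $\sigma_r:[0,s_r]\to M$, together with openness of chronological pasts, shows $\alpha(s)\ll\gamma(r)$ for every $s\in[0,a_+)$ and all sufficiently large $r$; in particular $x_0\in I^-(\gamma(R_0))\cap I^+(\gamma(0))$ for some $R_0$. Shrink $W$ to a compact neighbourhood $X_0\subset W$ of $x_0$ inside this chronological diamond; Avez--Seifert (in case (a)) then supplies a maximizing timelike geodesic $\mu_r^x:[0,s_r^x]\to M$ from $x$ to $\gamma(r)$ for each $x\in X_0$ and $r\ge R_0$.

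The new step is to establish a uniform tangent bound $\tilde C:=\sup\{|\dot\mu_r^x(0)|_{\tilde g}:x\in X_0,\,r\ge R\}<\infty$ for some $R\ge R_0$. Conceptually this holds because $b^+\in C^1(W)$ with $|db^+|_g\equiv 1$ (the norm following from \eqref{1-steep} in the limit) forces every asymptote $\beta_x$ from $x\in X_0$ to $\gamma$ to be timelike with unique tangent $\dot\beta_x(0)=\nabla b^+(x)$: Lemma~\ref{L:timelike} rules out nullity, while reverse Cauchy--Schwarz together with $db^+(x)\cdot\dot\beta_x(0)=1$ forces the uniqueness, so continuity of $\nabla b^+$ on $W$ and compactness of $X_0$ bound $|\nabla b^+|_{\tilde g}$ uniformly. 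To propagate the bound to arbitrary maximizers $\mu_r^x$, argue by contradiction: if $x_n\to x_*\in X_0$, $r_n\to\infty$ produced $\mu_n=\mu_{r_n}^{x_n}$ with $|\dot\mu_n(0)|_{\tilde g}\to\infty$, then reparameterizing by $\tilde g$-arclength and applying the limit curve theorem yields a subsequential limit $\tilde\mu_*$ starting at $x_*$; the identity $|\tilde\mu_n'(0)|_g=|\dot\mu_n(0)|_{\tilde g}^{-1}$ forces $\tilde\mu_*$ to be null, making it a null co-ray from $x_*$. An adaptation of the Lemma~\ref{L:timelike} argument --- applying Lemma~\ref{L:upper support} along $\mu_n$ at intermediate points $y_n:=\tilde\mu_n(\epsilon)$ and comparing at $\tilde\mu_*(s)$,
\begin{align*}
b^+_{r_n}(\tilde\mu_*(s))\le b^+_{r_n}(x_n)+\ell(x_n,y_n)-\ell(\tilde\mu_*(s),y_n),
\end{align*}
then passing to the limit using Dini-uniform convergence $b^+_r\to b^+$ on compact subsets of $W$ (valid because $\{b^+_r\}_r$ monotonically decreases via the reverse triangle inequality to the continuous $b^+$, with $\ell$ continuous in case (a)) --- yields $db^+(x_*)(\tilde\mu_*'(0))\le 0$, contradicting the strict positivity of this pairing when $db^+(x_*)$ is future timelike and $\tilde\mu_*'(0)$ is future null nonzero.

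With the tangent bound in hand, the remainder follows Proposition~\ref{P:semiconcave} almost verbatim. The compact set $K_{\tilde C}:=\{(v,y)\in TM:|v|_g=1,\,|v|_{\tilde g}\le\tilde C,\,y\in X_0\}$ admits, via \cite[Prop.\ 9.7]{BeemEhrlichEasley96}, a positive lower bound $t_0>0$ on the timelike cut time; fix $t\in(0,\min\{t_0,1\})$ and let $G:=\{\exp_y sv:(v,y,s)\in K_{\tilde C}\times[0,t]\}$, which is compact. For $x\in X_0$ and $r\ge R$ (enlarged if necessary), $y_r^x:=\mu_r^x(t)\in G$ and Lemma~\ref{L:upper support} yields the upper support $u_r^x(x'):=b^+_r(x)+\ell(x,y_r^x)-\ell(x',y_r^x)$ of $b^+_r$ at $x$. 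Proposition~3.4 of \cite{McCann20} supplies a semiconcavity constant $C(x,y)$ for $-\ell(\cdot,y)$, continuous on $\{\ell>0\}$; setting $C:=\max\{C(x,y):(x,y)\in X_0\times G,\,x\ll y\}$, finite by compactness, bounds the semiconcavity of every $b^+_r$ on $X_0$ uniformly. The conclusion follows on any compact $X\subset X_0$ with $X\subset I^-(\gamma(R))$. The main obstacle is the tangent bound: extending Lemma~\ref{L:timelike} from asymptotes (whose approximating maximizers share a common base) to co-rays (whose base points only converge) is the technical heart, and is enabled by the Dini convergence of the monotone family $\{b^+_r\}_r$ to the $C^1$ limit $b^+$ on compact subsets of $W$.
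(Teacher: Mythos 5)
Your proposal is correct, and it arrives at the conclusion along a route that overlaps heavily with the paper's in tools but is organized differently. You first re-derive a local analogue of Theorem~\ref{T:Lipschitz}(iii) --- a uniform $\tilde g$-bound on the initial velocities $\dot\mu_r^x(0)$ of the maximizers --- via a contradiction argument in which a putative unbounded sequence produces a null co-ray, then applying Lemma~\ref{L:upper support} together with Dini (monotone) convergence of $b^+_r\to b^+$ on the compact $X_0$ to force $g(\nabla b^+(x_*),\tilde\mu_*'(0))\le 0$, contradicting that $\nabla b^+$ is future-timelike. With that bound in hand you replay the cut-time machinery of Proposition~\ref{P:semiconcave} (the compact $K_{\tilde C}$, the lower bound $t_0>0$, the set $G$). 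The paper instead avoids any explicit tangent bound: it uses the identification $\sigma'(0)=db^+(x)$ (from Proposition~2.6 of \cite{GallowayHorta96} and \eqref{1-steep}) and $b^+\in C^1$ to show that the intermediate points $y_r=\sigma_r(\epsilon)$ converge --- first in $r$ to $\exp_x\epsilon\, db^+(x)$, then in $x$ continuously to $y_0$ --- and hence land in a fixed compact $Y$, with $X\times Y\subset\exp V$ lying in the region where $\ell$ is smooth by lower semicontinuity of the timelike cut time (\cite[Prop.~9.7]{BeemEhrlichEasley96} plus \cite[Thm.~3.6]{McCann20}); the semiconcavity constant is then read off from the bounded Hessian of $-\ell(\cdot,y)$ on $\exp V$. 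The paper's version is slightly leaner (the cut-time compactness enters exactly once, at $(x_0,\epsilon\, db^+(x_0))$, rather than being re-run); yours is more modular and makes explicit the diagonal compactness step (base points $x_n$ and parameters $r_n$ varying together) that the paper leaves implicit in the clause ``taking $R$ larger (independently of $x$ within the compact set $X$)''. Both are valid in case (a).
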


\begin{proof}
Let  $\alpha:[0,a_+)\longrightarrow M$ be a (future inextendible, proper-time parameterized) asymptote to $\gamma$ with $b^+$ continuously differentiable in a neighbourhood of $x_0=\alpha(a)$ for some $a\in [0,a_+)$.
Since \cite[Proposition 2.6]{GallowayHorta96} asserts $b^+$ increases at its minimal rate along $\alpha$,  it follows from \eqref{1-steep} that $\alpha'(a)=db^+(x_0)$.
Fixing $0<\epsilon <a_+-a$ yields $y_0:=\alpha(a+\epsilon) = \exp_{x_0} \epsilon db^+(x_0)$.  Since the asymptote is timelike and maximizing,   \cite[Proposition 9.7]{BeemEhrlichEasley96} combines with 
\cite[Theorem 3.6]{McCann20} to
yield a compact neighbourhood $V \subset TM$ of $(x_0,\epsilon db^+(x_0))$ in the timelike future bundle
on which the exponential map is defined and
$\ell$ is smooth on $\exp V$.  
Take $C$ large enough that $C \tilde g$ dominates the $\tilde g$-covariant Hessian (with respect to $x$) of $v(x):=-\ell(x,y)$ for all $(x,y) \in \exp V$.  
We claim $C$ is the desired semiconcavity constant.

Let $X \times Y \subset \exp V$ be a neighbourhood of $(x_0,y_0)$, where $\exp$ here denotes the map taking a vector $v_p$ to $(p,\exp_p(v)) \in M^2$.
Since $b^+(x_0)<\infty$, taking $R$ sufficiently large ensures $x_0 \in I^-(\gamma(R))$ (by the push-up property). Taking $X$ smaller if necessary guarantees $X \subset W \cap I^-(\gamma(R))$ as well. For each $x \in X$ and $r \ge R+1$, global hyperbolicity provides a proper-time parameterized maximizing geodesic segment $\sigma_r:[0,s_r]\longrightarrow M$ from $\sigma_r(0)=x$ to $\sigma_r(s_r)=\gamma(r)$.  The reverse triangle inequality shows $s_r \to \infty$, since $\ell(\gamma(R),\gamma(r)) \to \infty$ as $r \to \infty$.  
Set $y_r=\sigma_r(\epsilon)$.  
By Lemmas 2.1 (the limit curve theorem) and 2.4 of \cite{GallowayHorta96},  one can extract an asymptote $\sigma$ to $\gamma$ as a subsequential limit curve of $\sigma_r$ as $r \to \infty$.  
Since $x \in W$ and $b^+ \in C^1(W)$,  Lemma \ref{L:timelike}
asserts this asymptote is timelike.
Now \cite[Proposition 2.6]{GallowayHorta96} again asserts $b^+$ increases at its minimal rate along $\sigma$,
thus $\sigma(s)=\exp_x sdb^+(x)$ by the same logic as above,  so the full sequence $\sigma_r \to \sigma$ on $[0,a+\epsilon]$ and $y_r \to y$ as $r \to \infty$.  Our hypothesis $b^+ \in C^1$ yields $y \to y_0$ as $x \to x_0$,  so taking $X$ smaller and compact if necessary ensures $y \in Y$ for all $x \in X$ and all $r \ge R+1$.
Since $y_r \to y$ as $r \to \infty$, taking $R$ larger (independently of $x$ within the compact set $X$) then ensures $y_r \in Y$.

For each $x \in X$ and $r \ge R$, Lemma \ref{L:upper support} asserts $u(x') := b^+_r(x) + \ell(x,y_r) - \ell(x',y_r) \ge b^+_r(x')$ for all $x'$ near $x$. Since equality holds at $x'=x$, this means $u(x')$ supports $b^+_r$ from above at $x$.  
Thus $b^+_r$ inherits the asserted semiconcavity constant $C$ 
chosen above from $u$ at $x$.
\end{proof}

\begin{corollary}[$p$-superharmonicity of $b^+$ near asymptotes]
\label{C:asymptotes}
Assume the (a) globally hyperbolic spacetime $(M,g)$ satisfies the strong energy condition \eqref{strong energy}. Then 
inside the neighbourhood $X$ of $\alpha(a)$ identified in Proposition~\ref{P:asymptotes},
$b^+$ is $p$-superharmonic, semiconcave, and $|db^+|=1$.
\end{corollary}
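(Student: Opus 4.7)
The plan is to verify that the hypotheses of Corollary~\ref{L:harmonicity} hold on the neighbourhood $X$ identified in Proposition~\ref{P:asymptotes}, and then invoke that corollary verbatim. The two conditions of Corollary~\ref{L:harmonicity} are (i) equi-semiconcavity of $\{b^+_r\}_{r \ge R}$ on the domain and (ii) the existence, for each point $p$ of the domain and each large $r$, of a timelike maximizing geodesic from $p$ to $\gamma(r)$.

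Condition (i) is exactly what Proposition~\ref{P:asymptotes} delivers: for $R$ chosen sufficiently large, $(b^+_r)_{r\ge R}$ share the single semiconcavity constant $C$ throughout the compact neighbourhood $X$. For condition (ii), Proposition~\ref{P:asymptotes} also places $X \subset I^-(\gamma(R))$. Since $\gamma$ is a timelike ray, $\gamma(R) \ll \gamma(r)$ for $r > R$, so the push-up property for the timelike relation (noted after \eqref{rti}) gives $x \ll \gamma(r)$ for every $x \in X$ and $r \ge R$. Global hyperbolicity in case (a) then provides a maximizing causal geodesic between $x$ and $\gamma(r)$; this geodesic is timelike because $\ell(x,\gamma(r)) > 0$, and may be proper-time reparameterized.

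With both hypotheses verified, Corollary~\ref{L:harmonicity} applies with $V = X$ and yields precisely the three claims: $b^+ = \lim_{r\to\infty} b^+_r$ is semiconcave on $X$, it is $p$-superharmonic there, and $|db^+| = 1$ a.e.\ on $X$. The strong energy condition enters only through the weak d'Alembert comparison Proposition~\ref{Pr:dalembert comparison} used inside the proof of Corollary~\ref{L:harmonicity}, and the convergence machinery of Corollary~\ref{C:semiconcave} is what allows the comparison inequality to be passed to the limit $r\to\infty$. There is no genuine obstacle at this stage: the real technical work has been absorbed into Proposition~\ref{P:asymptotes} (equi-semiconcavity transported from the line to a neighbourhood of an arbitrary asymptote, using the timelike differentiability criterion of Lemma~\ref{L:timelike}) and into Corollary~\ref{L:harmonicity} itself, leaving the present corollary as a straightforward synthesis of those two results.
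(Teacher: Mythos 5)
Your argument is correct and is essentially the same as the paper's: cite Proposition~\ref{P:asymptotes} for the equi-semiconcavity of $\{b^+_r\}_{r\ge R}$ on $X$, use $X\subset I^-(\gamma(R))$ together with global hyperbolicity to produce timelike maximizers from points of $X$ to $\gamma(r)$, and then invoke Corollary~\ref{L:harmonicity}. You merely spell out the chronology step ($x\ll\gamma(R)\ll\gamma(r)$) a little more explicitly than the paper does.
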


\begin{proof}
Proposition \ref{P:asymptotes} provides the equi-semiconcavity of $\{b^+_r\}_{r \ge R}$ 
necessary for Corollary~\ref{L:harmonicity} to imply $|db^+|=1$ and semiconcavity and $p$-superharmonicity of $b^+$  on the interior of $X$;
global hyperbolicity ensures timelike maximizing geodesics connect each $x \in X \subset I^-(\gamma(R))$ to $\gamma(r)$ for $r \ge R$. 
\end{proof}

We are now ready to conclude the proof of Theorem \ref{T:local splitting} in {(a) the globally hyperbolic} case.

\begin{proof}[Proof of Theorem \ref{T:local splitting} in case (a)]
Recall from the proof of {(b) the timelike geodesically complete} case that in both cases we had identified a neighbourhood $U$ of $\gamma$ on which the Busemann functions $b=b^\pm$ coincide and have vanishing Hessian (hence $db$ is a parallel Killing vector field on $U$),
and a neighbourhood $W$ of $\gamma$
foliated by timelike lines 
$r\in (r^+_x,r^-_x) \mapsto E(r,x)$ which are future- and past-asymptotic to $\gamma$ 
for each $x \in S$,  where $S=U\cap S_0=W \cap S_0$ is spacelike and totally geodesic with timelike unit normal $N=db$ and $S_0 = \{ b=0\}$.  In fact $W$ was the image of an open set $\mathcal{D} \subset \R \times S$ under the smooth diffeomorphism $E(r,s)=\exp_x r db(x)$; moreover,  $b^\pm(E(r,x))=r$ on $\mathcal D$, so $b=b^\pm$ coincide and are smooth
on $W$.  We now argue their Hessians vanish there.

Corollary \ref{C:asymptotes} and its time-reversal
show $\pm b^\pm$ to be $p$-superharmonic on $W$ and $|db^\pm|=1$, thus $b=b^\pm$ is $p$-harmonic and satisfies the conclusions of Proposition \ref{P:maximum principle} on $W$.
Corollary~\ref{C:linear} then asserts that the Hessian of $b$ vanishes on $W$.
Thus $db$ is a parallel Killing vector field throughout $W$ (as was already known on $U$).  This shows
$E$ gives a local isometry between the metrics $dr^2 -h$ on $\mathcal{D}$ and $g$ on $W$,  where $-h$ is the restriction of $g$ to $S$.

Taking 
$U$ (and thus $W$) smaller if necessary ensures $S$ is a geodesic ball centered at $\gamma(0)$.
It remains to deduce that 
$E(\cdot,x)$ is complete for each $x \in S$,
so that $\mathcal{D} = \R \times S$.
This is shown as in Galloway \cite[p.\ 383]{Galloway89}, but we recall the argument for the convenience of the reader; 
we only argue future-completeness. Let $\hat\sigma:[0,R) \to S$ be any radial geodesic starting from $\hat\sigma(0)=\gamma(0)$, where $R$ is the radius of the ball $S$. Let $\alpha_s(\cdot):=E(\cdot,\hat\sigma(s))|_{[0,l_s)}$ be future-inextendible, and $l_s =r^+_{\hat\sigma(s)}\in (0,\infty]$ its Lorentzian arclength.
Fix any $r > R$. We will now show that 
\begin{equation}\label{Galloway's *}
    l_s > r - s
\end{equation}
for all $s \in [0,R)$, thus (by arbitrariness of $r>R$ and $\hat\sigma$) establishing the claimed future completeness of the asymptotic lines to $\gamma$ based in $S$. Denote by $A$ the set of $t \in [0,R)$ such that \eqref{Galloway's *} holds for all $s \in [0,t]$. Clearly, $A$ is an interval containing $0$, since $\alpha_0 = \gamma|_{[0,\infty)}$. Let $a:=\sup A$. We  deduce
$a=R$ by showing the non-empty interval $A$ is both relatively open and closed in $[0,R)$.  
First observe that
since $\mathcal D$ is open (or equivalently, recalling lower semicontinuity of $l_s = r^+_{\hat\sigma(s)}$ from \cite[Proposition 9.7]{BeemEhrlichEasley96}), 
it follows that $A$
is a relatively open subset of $[0,R)$.
On the other hand, we will derive a contradiction by
showing $a< R$ implies $a \in A$. We may also assume $a > 0$ since $0 \in A$ has already been checked. By definition, $l_s > r - s$ for any $s \in [0,a)$. 
The geometry we have established on $W$ shows $\eta(u):=E(r-u,\hat\sigma(u))$ to be a past-directed null geodesic $\eta:[0,a) \to M$ from $\eta(0) = \gamma(r)$. We claim that $l_a > r - a$. If not, then $l_s > r - s > r - a \geq l_a$ for any $s \in [0,a)$. Thus, $\alpha_s(t)$ is well-defined for $s \in [0,a)$ and $t \in [0,l_a)$, and $\alpha_a(t) = \lim_{s \to a} \alpha_s(t)$ by continuity of $E$. It follows that for $s \in [0,a)$,
\begin{equation*}
    \alpha_s(t) \ll \alpha_s(r - s) = E(r-s,\hat\sigma(s)) = \eta(s) \leq \gamma(r).
\end{equation*}
Taking $s \to a$ above and using the closedness of the causal relation guaranteed by global hyperbolicity (a), we conclude that $\alpha_a \subset J^+(\hat\sigma(a)) \cap J^-(\gamma(r))$, a contradiction to non-total imprisonment. Thus $l_a > r -a$ and hence $a \in A$. 
To avoid this contradiction, $a = R$ and \eqref{Galloway's *} holds for all $s \in [0,R)$, to establish the claim.
\end{proof}

\section{Global splitting}

Finally, to globalize the local splitting using connectedness of $M$,  we follow the strategy of 
Eschenburg~\cite{Eschenburg88} (augmented by an observation of Galloway \cite{Galloway89} when {timelike geodesic completeness} (b) fails to hold). We detail the argument for completeness.
Recall a {\em flat strip} refers to a totally geodesic isometric immersion 
$F:(\R \times [0,s_0], dr^2 - ds^2) \longrightarrow (M,g)$ such that $r \in \R \mapsto F(r,s)$ is a (complete) line for any $s \in [0,s_0]$.
Two  such lines $\gamma$ and $\tilde \gamma$ are called {\em strongly parallel} if they bound a flat strip, so that $\gamma(r) =F(r,0)$
and $\tilde \gamma(r) =F(r,s_0)$ for all $r \in \R$ and some $F$ as above. 
To globalize Theorem \ref{T:local splitting},  it is elementary to recall that \cite[Lemma 7.2]{Eschenburg88} 
holds without assuming {global hyperbolicity} (a) or (b):

\begin{lemma}[Strongly parallel lines share their Busemann functions]\label{L:parallel}
Under the hypotheses of Theorem \ref{T:Lipschitz}: 
if $\tilde \gamma$ and $\gamma$ are strongly parallel lines,  then $I(\tilde \gamma)=I(\gamma):=I(\gamma(\R),\gamma(\R))$ 
and their forward Busemann functions $\tilde b^+$ and $b^+$ coincide.
\end{lemma}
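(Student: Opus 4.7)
The plan is to handle the two claims of the lemma separately, with both following from elementary manipulations inside the flat strip $F:\R\times[0,s_0]\longrightarrow M$ bounding $\gamma$ and $\tilde\gamma$. Since $F$ is a totally geodesic isometric immersion of the Minkowski product $dr^2-ds^2$, the straight Minkowski segment joining $(r-2s_0,s_0)$ to $(r,0)$ lifts to a future-directed timelike geodesic in $M$ of Lorentzian length $s_0\sqrt{3}$; hence $\tilde\gamma(r-2s_0)\ll\gamma(r)$ for every $r\in\R$, and analogously with the roles of $\gamma$ and $\tilde\gamma$ interchanged.

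For the first assertion $I(\tilde\gamma)=I(\gamma)$, fix any $x\in I^+(\gamma)$, so that $\gamma(r)\ll x$ for some $r$. Combining with $\tilde\gamma(r-2s_0)\ll\gamma(r)$ and the push-up property recalled in the Introduction gives $\tilde\gamma(r-2s_0)\ll x$, i.e.\ $x\in I^+(\tilde\gamma)$. The reverse inclusion follows by swapping the two lines, and the corresponding statements for $I^-$ hold by a time-symmetric argument.

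For the equality $b^+=\tilde b^+$ I would work on $I(\gamma)=I(\tilde\gamma)$, where the approximate Busemann functions are real-valued. Fix $x$ in this set and a parameter $\delta>s_0$. The same flat-strip computation gives
\begin{equation*}
\ell(\tilde\gamma(R-\delta),\gamma(R))\;\ge\;\sqrt{\delta^2-s_0^2}
\end{equation*}
for every $R$, and for $R$ sufficiently large one has $x\ll\tilde\gamma(R-\delta)$. The reverse triangle inequality \eqref{rti} applied to $x\ll\tilde\gamma(R-\delta)\ll\gamma(R)$ then yields $\ell(x,\gamma(R))\ge\ell(x,\tilde\gamma(R-\delta))+\sqrt{\delta^2-s_0^2}$. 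Using the proper-time parametrizations $\ell(\gamma(0),\gamma(R))=R$ and $\ell(\tilde\gamma(0),\tilde\gamma(R-\delta))=R-\delta$ this rearranges into
\begin{equation*}
b^+_R(x)\;\le\;\tilde b^+_{R-\delta}(x)+\delta-\sqrt{\delta^2-s_0^2}.
\end{equation*}

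Letting $R\to\infty$ with $\delta$ fixed and invoking the monotone convergence from \eqref{ordering} produces $b^+(x)\le\tilde b^+(x)+\delta-\sqrt{\delta^2-s_0^2}$. Finally, sending $\delta\to\infty$ and noting $\delta-\sqrt{\delta^2-s_0^2}=s_0^2/(\delta+\sqrt{\delta^2-s_0^2})\to 0$ gives $b^+(x)\le\tilde b^+(x)$; the reverse inequality follows by interchanging $\gamma$ and $\tilde\gamma$. The delicate point, and the one I expect to require the most care, is matching the two normalizations: the shift $R\mapsto R-\delta$ on the $\tilde\gamma$ side must be exactly what produces the extra $\delta$ on the right-hand side, so that after $R\to\infty$ the residual discrepancy is the tractable quantity $\delta-\sqrt{\delta^2-s_0^2}$, which the subsequent limit $\delta\to\infty$ then extinguishes. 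Points outside $I(\gamma)=I(\tilde\gamma)$ require no separate argument, since both Busemann functions take the value $+\infty$ there.
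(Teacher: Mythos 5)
Your argument is correct and is essentially the standard flat-strip computation underlying Eschenburg's Lemma 7.2, which the paper invokes by citation without reproducing a proof; the two-stage limit ($R\to\infty$ at fixed $\delta$, then $\delta\to\infty$), the bound $\ell(\tilde\gamma(R-\delta),\gamma(R))\ge\sqrt{\delta^2-s_0^2}$ from the totally geodesic isometric immersion, and the normalization bookkeeping matching the $\delta$-shift in the $\tilde\gamma$ parameter with the extra $\delta$ on the right are exactly the devices one expects. One small imprecision in the last sentence: the Busemann functions are $+\infty$ only off $I^-(\gamma)$, not off the smaller set $I(\gamma)$; this is harmless, however, since your displayed estimate uses only $x\in I^-(\tilde\gamma)$ (to secure $x\ll\tilde\gamma(R-\delta)$ for $R$ large) and hence already proves $b^+=\tilde b^+$ on all of $I^-(\gamma)=I^-(\tilde\gamma)$, with the complement covered by the $+\infty$ observation.
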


\begin{proof}[Proof of Theorem \ref{T:Lorentzian splitting}]
Let $W \subset M$ be the largest connected open subset (ordered by inclusion) on which the conclusion of Theorem \ref{T:local splitting} holds,
meaning $E(r,x) := \exp_x r db^+$ gives a local isometry from $(\R \times S, dr^2 -h)$ onto its image $W$ in $(M,g)$,  
and $r \in \R \mapsto E(r,x)$ is an $S_0$-line for each $x \in S=W \cap S_0$ (and $\gamma(\cdot)=E(\cdot,\gamma(0))$), and $h$ is the restriction of $-g$ to $S$.  
Such a subset exists by Zorn's lemma,  and is non-empty by Theorem \ref{T:local splitting}.
We claim $W$ is a connected component of~$M$.

Whenever $\hat \sigma:[0,s_0] \longrightarrow S$ is an $h$-geodesic in $S$,  then $F(r,s) = E(r,\hat \sigma(s))$ is a flat strip, so its boundaries are strongly parallel.  Lemma~\ref{L:parallel} shows the Busemann functions associated with the lines $E(\cdot,x)$ and $E(\cdot,y)$ through $x=\hat\sigma(0)$
and $y =\hat \sigma(s_0)$ coincide.  
Since $W$ is an open and connected Lorentzian product of $(\R,dr^2)$ with $(S,h)$,
it follows that the hypersurface $S$ is totally geodesic and path-connected.  
Any path in $S$ from $\gamma(0)$ to $x \in S$ can therefore be replaced by a broken geodesic consisting of arbitrarily short geodesic segments, so iterating the argument above yields a finite sequence of lines starting with $\gamma$ and ending with $\beta(\cdot) = E(\cdot, x)$ such that each adjacent pair of lines in the sequence is strongly parallel.  Thus $I(\gamma) = I(\beta)$ and the Busemann functions associated with $\gamma$ and $\beta$
 coincide.

To derive a contradiction, suppose $W$ has a boundary point $y \in \p W$.
Fix a coordinate chart around $y$. Every Euclidean
ball of sufficiently small radius in these coordinates centered near $y$ will be Lorentzian-geodesically convex.  Choose such a ball $B_{2\delta}(y)$ centered at $y$ and then 
$x \in W\cap B_\delta(y)$ sufficiently close to $y$
that the largest Euclidean ball $B_\epsilon(x)$ in $W$ is also Lorentzian-geodesically convex.  This maximality of $\epsilon < 2\delta$ implies there exists $z \in \p B_\epsilon(x) \cap \p W$.  Moreover,  there is a Lorentzian geodesic
$\sigma:[0,s_0] \longrightarrow M$ in $B_\epsilon(x)$
from $x=\sigma(0)$ to $z=\sigma(s_0)$
of the form $\sigma(s) = ((1-s/s_0)r_0 + sr_1/s_0,\hat \sigma(s))$
where $\hat \sigma(s)$ is a (nonconstant) $h$-geodesic in $S$.
For each $s<s_0$,  the preceding paragraph shows
the line $E(\cdot,\hat \sigma(s))$ through $\sigma(s)$ 
shares the same Busemann function
as~$\gamma$.  The product geometry guarantees
that the tangent $X(s)$ to this line at $\sigma(s)$ is the parallel translate along $\sigma$ of the tangent $X(0)$
to the analogous line through $x=\sigma(0)$.
Letting $s\to s_0$,  \cite[Lemma~2.4]{GallowayHorta96} provides
a subsequence of these lines which converge in the limit curve sense to a line through $z=\sigma(s_0)$.
Since the parallel transport $X(s_0)$ of $X(0)$
along $\sigma$ is tangent to this line, and timelike,
we can proper-time reparameterize the limiting line
as $\tilde \gamma:(a_-,a_+)\longrightarrow M$.  By the time-translation symmetry of $W$
we can assume $r_1=0$, and choose $z=\tilde \gamma(0)$ to lie in the closure 
of $S$.
 
In case (b) the line $\tilde \gamma$ is complete: $a_+=\infty=-a_-$.  
Denote its forward Busemann function by 
$\tilde b^+$.  Theorem~\ref{T:local splitting} provides a neighbourhood $\tilde W$ of $\tilde \gamma$
on which $\tilde E(r,x) := \exp_x r d\tilde b$ gives a local isometry $\tilde E:(\R \times \tilde S, dr^2 - \tilde h) \longrightarrow (M,g)$ such that
$r \in \R \mapsto \tilde E(r,x)$ is an $\tilde S_0$-line for each $x \in \tilde S=\tilde W \cap \tilde S_0$ (and $\tilde \gamma(\cdot)=\tilde E(\cdot,\tilde \gamma(0))$),
and $\tilde h$ is the restriction of $-g$ to $\tilde S$. Since the product geometry shows both $S_0$ and $\tilde S_0$ are totally geodesic and orthogonal to $X(s_0)$, they must coincide on the nonempty set $W \cap \tilde W$
--- as must $E$ and $\tilde E$.  Since
$\tilde F(r,s) = \exp_{\sigma(s)} r X(s)$
is a flat strip bounded by $\beta$ and $\tilde \gamma$,  Lemma
\ref{L:parallel} shows $b^+=\tilde b^+$
hence $S_0=\tilde S_0$.
Note for $w \in S$ and $\tilde w \in \tilde S$, the lines $E(\cdot,w)$ and $\tilde E(\cdot,\tilde w)$ cannot cross (unless they coincide), since both are assumed to maximize time to $S_0$. Thus
$W \cup \tilde W$ provides an enlargement of $W$ on which the conclusion of Theorem \ref{T:local splitting} holds, contradicting the assumed maximality of $W$.
This contradiction forces $\p W$ to be empty;
connectedness of $M$ yields $M=W$,
and the splitting becomes global by Corollary \ref{C:local splitting}.
  Ricci nonnegativity of $(S,h)$ follows from the strong energy condition \eqref{strong energy} by the tensorization of the Ricci tensor in 
product geometries, as in \cite[Corollary 7.43]{O'Neill83} with trivial warping factor $f=1$.

The logic and conclusion of the preceding paragraph will apply to {(a) the globally hyperbolic} case  also as soon as
completeness of the timelike line $\tilde \gamma$
is established.  We'll show future-completeness $a_+=\infty$ as in \cite{Galloway89};  past-completeness $a_-=-\infty$ can be shown similarly (or by time-reversal symmetry).
Defining $\tilde F(r,s) = \exp_{\sigma(s)} r X(s)$
as above, it remains true that the restriction of $\tilde F$ to $\R \times [0,s]$ is a flat strip for each $s<s_0$.  Moreover,  for each $t \in (a_-,a_+)$
there is a sequence $(r_i,s_i) \in \R \times [0,s_0)$
such that $\tilde F(r_i,s_i) \to \tilde \gamma(t)$.
Since $\tilde \gamma(t)$ is separated from $\tilde \gamma(0) \in \bar S$ by time $t<a_+$ and $\tilde F(r_i,s_i)$ is separated from $\tilde \gamma(0)$ by time not much less than $r_i + (1-s_i/s_0)r_0$,
continuity of the time-separation function $\ell$
implies $r_i < a_+ + 1$ for $i$ sufficiently large.
Also $R>|r_0| + a_++1 + |\hat \sigma'(0)|_h$
ensures 
$\tilde F(R,0)$ lies in the future of
$\tilde F(r_i,s_i)$ for all $i$ sufficiently large,
by \eqref{local to global}.
Since $t<a_+<\infty$ was arbitrary, the restriction 
$\tilde \gamma|_{[0,a_+)}$ --- being future-inextendible --- is a timelike curve of unbounded $\tilde g$-length in the compact diamond 
$J(\tilde \gamma(0),\tilde F(R,0))$,
contradicting the nontotal imprisonment which global hyperbolicity (a) implies.
We therefore conclude future-
completeness of $\tilde \gamma$:  $a_+=+\infty$ 
(and past-completeness $a_-=-\infty$ similarly).

Apart from (metric) completeness of $(S,h)$,  Theorem \ref{T:Lorentzian splitting} has now been established.
To see completeness of $(S,h)$, let $x_k \in S$ denote a Cauchy sequence.  Then the entire sequence $\{x_k\}_{k \in {\mathbf N}}$ lies in an open $h$-ball $B_r(x_1)$
of radius $r$ sufficiently large. From \eqref{global time separation} it follows that $x_k$ also lies in the diamond $J(E(-r,x_1),E(r,x_1))$, which is compact assuming
(a) global hyperbolicity.  In this case $x_k$ admits a subsequential limit $x_\infty$ in $M$.  We claim $d_h(x_k,x_\infty) \to 0$. This follows from the facts
(i) that $d_h$ metrizes the topology $S$ inherits from $M$,
and (ii) that whenever a Cauchy sequence has a convergent subsequence then the full sequence also converges (to the same limit).

If instead $(M,g)$ is (b) timelike geodesically complete,  we will assume incompleteness of $(S,h)$ to derive a contradiction. In this case the Hopf-Rinow theorem provides an $h$-geodesic $\tau:(s,t) \longrightarrow S$ which is inextendible (say at $t$).
Lifting $\tau$ produces a timelike geodesic $\beta(r):=(r,\tau(r/2))$ in the product metric $dr^2 -h$, which is future-inextendible at $r=2t$:
the desired contradiction to (b).  So $(S,h)$ is complete and Theorem \ref{T:Lorentzian splitting} is established.
\end{proof}

\section*{Acknowledgments}

MB is supported by the EPFL through a Bernoulli Instructorship. A large part of this work was carried out during his Postdoctoral Fellowship at the University of Toronto.  NG is supported in part by the MUR PRIN-2022JJ8KER grant ``Contemporary perspectives on geometry and gravity". RJM's research is supported in part by the Canada Research Chairs program CRC-2020-00289, a grant from the Simons Foundation (923125, McCann), Natural Sciences and Engineering Research Council of Canada Discovery Grant RGPIN- 2020--04162, and Toronto's Fields Institute for the Mathematical Sciences, where this collaboration began. AO is supported in part by the ÖAW scholarship of the Austrian Academy of Sciences. C.S.\ was partly supported by the European Research Council (ERC), under the European’s Union Horizon 2020 research and innovation programme, via the ERC Starting Grant “CURVATURE”, grant agreement No.\ 802689. This research was funded in part by the Austrian Science Fund (FWF) [Grants DOI \href{https://doi.org/10.55776/STA32}{10.55776/STA32}, \href{https://doi.org/10.55776/EFP6}{10.55776/EFP6} and \href{https://doi.org/10.55776/J4913}{10.55776/J4913}]. For open access purposes, the authors have applied a CC BY public copyright license to any author accepted manuscript version arising from this submission. The authors are grateful to Guido De Philippis and Cale Rankin for stimulating exchanges.

\bibliography{newbib} 
\bibliographystyle{abbrv}
\end{document}